\newcommand{\weakto} {\rightharpoonup}                 
\newcommand{\weakstarto}{\stackrel {*} {\weakto}}      
\newcommand{\aac}{<\!\!<}
\newcommand{\defeq} {:=}
\newcommand{\mean}[1]{\,-\hskip-1.08em\int_{#1}} 
\newcommand{\meantext}[1]{\,-\hskip-0.88em\int_{#1}} 
\newcommand{\res}{\mathop{\hbox{\vrule height 7pt width .5pt depth 0pt
\vrule height .5pt width 6pt depth 0pt}}\nolimits} 
\newcommand{\R}{\mathbb{R}}
\newcommand{\Haus}[1]{{\mathscr H}^{#1}} 
\newcommand{\Leb}[1]{{\mathscr L}^{#1}} 
\newtheorem{theorem}{Theorem}[section]
\newtheorem{lemma}[theorem]{Lemma}
\newtheorem{proposition}[theorem]{Proposition}
\newtheorem{corollary}[theorem]{Corollary}
\theoremstyle{definition}
\theoremstyle{remark}
\newtheorem{remark}[theorem]{Remark}
\date{March 10, 2012}
\title{A nonautonomous chain rule in $W^{1,p}$ and $BV$}
\author{Luigi Ambrosio\footnote{Scuola Normale Superiore,
p.za dei Cavalieri 7, I-56126 Pisa, Italy, e--mail:
l.ambrosio@sns.it},
Graziano Crasta \footnote{Dipartimento di Matematica G.~Castelnuovo, Univ.\ di Roma La Sapienza, P.le A.~Moro 2, I-00185 Roma, Italy, e-mail: crasta@mat.uniroma1.it },
Virginia De Cicco\footnote{Dipartimento di Scienze di Base e Applicate per l'Ingegneria, Univ.\ di Roma La Sapienza, Via A. Scarpa 10, I-00185 Roma, Italy, e-mail: virginia.decicco@sbai.uniroma1.it },
Guido De Philippis\footnote{Scuola Normale Superiore,
p.za dei Cavalieri 7, I-56126 Pisa, Italy, e--mail:
guido.dephilippis@sns.it}}
\begin{document}
\maketitle
\section{Introduction}

The aim of this paper is to prove a generalization of the following
chain rule formula in $BV$ and in Sobolev spaces. Let
$F\colon\mathbb{R}^h \to \mathbb{R}$ be a $C^1$ function with
bounded gradient. It is well-known that, if $u\in BV_{\rm
loc}(\mathbb{R}^n;\mathbb{R}^h)$, then the composite function $v(x)
\defeq F(u(x))$ belongs to $BV_{\rm loc}(\mathbb{R}^n)$ and the
following chain rule formula holds in the sense of measures:
\begin{itemize}
\item[(i)] (diffuse part)
$\widetilde{D}v = \nabla F(\tilde{u}) \widetilde{D} u$;
\item[(ii)] (jump part)
$D^j v = [F(u^+) - F(u^-)] \nu_{J_u}\Haus{n-1}\res J_u$,
\end{itemize}
where $\widetilde{D} u$, $\tilde{u}$, $J_u$, $\nu_{J_u}$ and
$u^{\pm}$ denote respectively the diffuse part of the measure $Du$,
the precise representative of $u$, the jump set of $u$, the normal
vector to $J_u$ and the one-sided traces of $u$ (see
\cite[Thm.~3.96]{AFP}). The $C^1$ regularity of $F$ can be omitted
(requiring $F$ to be only Lipschitz continuous), but in this case
since the image of $u$ might be a low-dimensional set the gradient
$\nabla F$ appearing in (i) should be properly understood, see
\cite{ADM} and \cite{LM}; moreover, an analogous result holds true
also in the vectorial case $F\colon \mathbb{R}^h \to \mathbb{R}^p$.

In recent years this formula has been generalized in
order to deal with an explicit non-smooth dependence of $F$ from the
space variable $x$,
especially in view to applications to semicontinuity results for integral
functional (see \cite{DCFV1,DCFV2}) and to hyperbolic systems of conservation laws (see \cite{CDC}).

The result proved in this paper goes in this direction: given
$F\colon \mathbb{R}^n\times\mathbb{R}^h\to\mathbb{R}$, with $F(x,
\cdot)$ of class $C^1(\mathbb{R}^h)$ for almost every
$x\in\mathbb{R}^n$ and $F(\cdot, z)\in BV_{\rm loc}(\mathbb{R}^n)$
for all $z\in \mathbb{R}^h$, we establish the validity of a chain
rule formula for the composite function $v(x) = F(x, u(x))$ with
$u\in  BV_{\rm loc}(\mathbb{R}^n;\mathbb{R}^h)$. We believe that, in
our general setting, this formula can be a useful tool in the
analysis of the problems mentioned above, in cases where the
dependence from the space variables is of $BV$ type. In addition,
our result provides also a chain rule in the case when $u$ and
$F(\cdot,z)$ are Sobolev, see \eqref{eq:chainSobolev}.

The scalar case $h=1$ has been considered  in \cite{DCFV1} and
\cite{DCFV2} in the case of a dependence of $F$ with respect to $x$
respectively of Sobolev type and of $BV$ type.

The one-dimensional case $n=1$ has been studied in \cite{CDC}, where
a very explicit formula has been obtained at the price of some
additional assumptions (see Remark~\ref{compare} for a detailed
comparison with the set of assumptions of the present paper).

In this paper we prove a chain rule formula in the general case
$n\geq 1$ and $h\geq 1$ under some structural assumptions that now
we briefly describe. According to the classical case mentioned at
the beginning, the first additional assumption is a uniform bound on
$\nabla_z F(x,z)$ (see (H1) below). Concerning the $x$-derivative,
we need to require the existence of a Radon measure $\sigma$
bounding from above all measures $|D_x F(\cdot, z)|$, uniformly with
respect to $z\in\mathbb{R}^h$ (see assumption (H4) below). With
these two bounds we can prove that for any $u\in BV_{\rm loc}$ the
composite function $v(x) = F(x,u(x))$ belongs to $BV_{\rm loc}$ (see
Lemma~\ref{vBV} and Remark~\ref{remH4}). Moreover, we can show the
existence of a countably $\Haus{n-1}$-rectifiable set
$\mathcal{N}_F$, independent of $u$ and containing the jump set of
$F(\cdot,z)$ for every $z\in\mathbb{R}^h$, such that the jump set of
$v$ is contained in $\mathcal{N}_F\cup J_u$.

On the other hand, in order to prove the validity of the chain rule
formula we require that $F$ satisfies other structural assumptions
related to the uniform continuous dependence of $\nabla_z F$ and
$\widetilde{D}_x F$ with respect to $z$ (see assumptions (H2) and
(H3) below). All these assumptions are enough in order to prove the
following chain rule formula:
\begin{itemize}
\item[(i)] (diffuse part) $|Dv|\ll\sigma+|Du|$ and, for any Radon measure $\mu$ such that
$\sigma+|Du|\ll\mu$, it holds
\[
\frac{d\widetilde{D}v}{d\mu} =\frac{d\widetilde{D}_x F(\cdot,\tilde
u(x))}{d\mu} +\nabla_z\tilde F(x,\tilde u(x))\frac{d\widetilde{D}
u}{d\mu}.
\]
\item[(ii)] (jump part)
$D^j v=\big(F^+(x,u^+(x))-F^-(x,u^-(x)\big)\nu_{\mathcal N_F\cup
J_u}\Haus{n-1}\res \mathcal (\mathcal N_F\cup J_u)$
in the sense of measures.
\end{itemize}

In particular, when $u$ and $F(\cdot,z)$ belong to a Sobolev space, we obtain that the composite function $v$ belongs
to the same Sobolev  space and the following formula holds:
\begin{equation}\label{eq:chainSobolev1}
\nabla v(x)=\nabla_x F(x,u(x))+\nabla_z F(x,u(x))\nabla u(x) \qquad
\text{$\Leb{n}$-a.e. in $\R^n$}
\end{equation}
(see \cite{DCL} for a analogous result in the more general context of vector fields with $L^1$ divergence).

In order to prove (i) we use a blow-up argument as in the proof of
Theorem~3.96 in \cite{AFP}, which allows to treat at the same time
all higher dimensional cases $n\geq 1$ and $h\geq 1$ (in this
respect we recall that the approach based on convolutions does not
seem to work in the case $h>1$).

The explicit non-smooth dependence of $F$ with respect to $x$ gives
rise to several major technical difficulties. It is then crucial to
firstly investigate some fine properties of the function $F$ (see
Section~\ref{s:fine}). For example, we show that $\Haus{n-1}$-a.e.\
in $\mathcal{N}_F$ the one-sided traces $F^{\pm}(\cdot, z)$ exist
for all $z\in\mathbb{R}^h$ (see Proposition~\ref{convuniforme}).

\smallskip
\noindent {\bf Acknowledgement.} The first and fourth authors
acknowledge the support of the ERC ADG GeMeThNES.

\section{Notation and preliminary results}

In this section we recall our main notation and preliminary facts on
$BV$ functions. A general reference is Chapter 3 of \cite{AFP}, and
occasionally we will give more precise references therein.

We denote by $\Leb{n}$ the Lebesgue measure in $\R^n$ and by
$\Haus{k}$ the $k$-dimensional Hausdorff measure. The restriction of
$\Haus{k}$ to a set $A$ will be denoted by $\Haus{k}\res A$, so that
$\Haus{k}\res A(B)=\Haus{k}(A\cap B)$. By $\meantext{A}$ we mean
averaged integral on a set $A$. By Radon measure we mean a
nonnegative Borel measure finite on compact sets.

We say that $f\in L^1(\R^n)$ belongs to $BV(\R^n)$ if its derivative
in the sense of distributions is representable by a vector-valued
measure $Df=(D_1f,\ldots,D_nf)$ whose total variation $|Df|$ is
finite, i.e.
$$
\int_{\R^n}f\frac{\partial\phi}{\partial x_i} dx=-\int\phi\,D_if
\qquad\forall\phi\in C^\infty_c(\R^n),\,\,i=1,\ldots,n
$$
and $|Df|(\R^n)<\infty$. The $BV_{\rm loc}$ definition is analogous,
requiring that $|Df|$ is a Radon measure in $\R^n$.

\noindent {\bf Approximate continuity and jump points.} We say that
$x\in\R^n$ is an approximate continuity point of $f$ if, for some
$z\in\R$, it holds
$$
\lim_{r\downarrow 0}\mean{B_r(x)}|f(y)-z| dy=0.
$$
The number $z$ is uniquely determined at approximate continuity
points and denoted by $\tilde{f}(x)$, the so-called approximate
limit of $f$ at $x$. The complement of the set of approximate
continuity points, the so-called singular set of $f$, will be
denoted by $S_f$.

Analogously, we say that $x$ is a jump point of $f$, and we write
$x\in J_f$, if there exists a unit vector $\nu\in{\bf S}^{n-1}$ and
$f^+,\,f^-\in\R$ satisfying $f^+\neq f^-$ and
$$
\lim_{r\downarrow 0}\mean{B^\pm(x,r)}|f(y)-f^\pm| dy=0,
$$
where $B^\pm(x,r):=\{y\in B_r(x): \pm\langle y-x,\nu\rangle\geq 0\}$
are the two half balls determined by $\nu$. At points $x\in J_f$ the
triplet $(f^+,f^-,\nu)$ is uniquely determined up to a permutation
of $(f^+,f^-)$ and a change of sign of $\nu$; for this reason, with
a slight abuse of notation, we do not emphasize the $\nu$ dependence
of $f^\pm$ and $B^\pm(x,r)$. Since we impose $f^+\neq f^-$, it is
clear that $J_f\subset S_f$. Moreover
\begin{equation}\label{inclju}
J_f\subset\bigl\{x: \limsup_{r\downarrow
0}\frac{|Df|(B_r(x))}{\omega_{n-1}r^{n-1}}>0\bigr\}
\end{equation}
and
\begin{equation}\label{inclju2}
\Haus{n-1}\Big(\bigl\{x: \limsup_{r\downarrow
0}\frac{|Df|(B_r(x))}{\omega_{n-1}r^{n-1}}>0\bigr\}\setminus
J_f\Big)=0,
\end{equation}
see the proof of \cite[Lemma 3.75]{AFP}.

Recall that a set $\Sigma$ is said to be countably $\Haus{n-1}$
rectifiable if $\Haus{n-1}$-almost all of $\Sigma$ can be covered by
a sequence of $C^1$ hypersurfaces. For any $BV_{\rm loc}$ function
$f$, the set $S_f$ is countably $\Haus{n-1}$ rectifiable and
$\Haus{n-1}(S_f\setminus J_f)=0$.

\noindent \noindent {\bf Decomposition of the distributional
derivative.}

For any oriented and countably $\Haus{n-1}$-rectifiable
$\Sigma\subset\R^n$ we have
\begin{equation}\label{rappDjSigma}
Df\res \Sigma=(f^+-f^-)\nu_\Sigma\Haus{n-1}\res\Sigma.
\end{equation}

For any $f\in BV(\R^n)$, we can decompose $Df$ as the sum of a
diffuse part, that we shall denote $\widetilde{D}f$, and a jump part,
that we shall denote by $D^jf$. The diffuse part is characterized by
the property that $|\widetilde{D}f|(B)=0$ whenever $\Haus{n-1}(B)$ is
finite, while the jump part is concentrated on a set $\sigma$-finite
with respect to $\Haus{n-1}$. The diffuse part can be then split as
\[
\widetilde{D} f= D^a f+D^c f
\]
where $D^a f$ is the absolutely continuous part with respect to the
Lebesgue measure, while $D^c f$ is the so-called Cantor part. The
density of $Df$ with respect to $\Leb n$ can be represented as
follows
\begin{equation}\label{absolute}
D^a f= \nabla f \, d\Leb n,
\end{equation}
where $\nabla f$ is the approximate gradient of $f$, i.e. the only vector such that
\begin{equation}\label{appgrad}
\lim_{r \downarrow 0} \frac{1}{r^{n+1}} \int_{B_r(x)}
|f(y)-f(x)-\nabla f(x) \cdot (y-x)|=0\qquad\text{for almost every
$x$,}
\end{equation}
see \cite[Proposition 3.71 and Theorem 3.83]{AFP}.

 The jump part can be easily computed
by taking $\Sigma=J_f$ (or, equivalently, $S_f$) in
\eqref{rappDjSigma}, namely
\begin{equation}\label{rappDj}
D^jf=Df\res J_f=(f^+-f^-)\nu_{J_f}\Haus{n-1}\res J_f.
\end{equation}

All these concepts and results extend, mostly arguing component by
component, to vector-valued $BV$ functions, see \cite{AFP} for
details.

\section{The chain rule}

Let $F\colon\R^n\times\R^h\to\R$ be satisfying:
\begin{enumerate}
\item[(a)] $x\mapsto F(x,z)$ belongs to $BV_{\rm loc}(\R^n)$ for all
$z\in\R^h$;
\item[(b)] $z\mapsto F(x,z)$ is continuously differentiable in $\R^h$ for almost every $x\in\R^n$.
\end{enumerate}
We will use the notation $\nabla_z F(x,z)$ to denote the
(continuous) gradient of $z\mapsto F(x,z)$ and $D_x F(\cdot,z)$ to
denote the distributional gradient of $x\mapsto F(x,z)$. We will use
the notation $C_F$ to denote a Lebesgue negligible set of points
such that $F(x,\cdot)$ is $C^1$ for all $x\in\R^n\setminus C_F$.

We assume throughout this paper that $F$ satisfies, besides (a) and
(b), the following \emph{structural assumptions}:

\begin{enumerate}
\item[(H1)]For some constant $M$, $|\nabla_z F(x,z)|\le M$ for all
$x\in\R^n\setminus C_F$ and $z\in\R^h$.
\item[(H2)]For any compact set $H\subset\R^h$ there exists a modulus of continuity
$\tilde\omega_H$ independent of $x$ such that
\[
 |\nabla_z F(x,z)-\nabla_z F(x,z')|\leq \tilde \omega_H(|z-z'|)
 \]
for all $z,\,z'\in H$ and $x\in\R^n\setminus C_F$.
\item[(H3)]For any compact set $H\subset\R^h$ there exist a positive Radon measure
$\lambda_H$ and a modulus of continuity $\omega_H$ such that
\[
|\widetilde{D}_x F(\cdot,z)(A)-\widetilde{D}_x F(\cdot,z')(A)|\le
\omega_H(|z-z'|)\lambda_H(A)
\]
for all $z,\,z'\in H$ and $A\subset\R^n$ Borel.
\item[(H4)] The measure
\begin{equation}\label{defsigma}
\sigma\defeq \bigvee_{z\in\R^h} |D_x F(\cdot,z)|,
\end{equation}
(where $\bigvee$ denotes the least upper bound in the space of
nonnegative Borel measures) is finite on compact sets, i.e. it is a
Radon measure.
\end{enumerate}

{\color{blue} In connection with (H4), notice that already (H3) ensures that the supremum on bounded sets
of $\R^h$ of the diffuse parts is bounded. Hence, at least if we consider locally bounded functions $u$, the new requirement in
(H4) is for the jump parts, see also condition (A1) in Remark~\ref{compare}. Many variants of these assumptions are indeed possible,
for instance one could require in (H3)  a bounded and global modulus of continuity and require (H4) only for the jump parts.}

Still in connection with (H4), notice that an equivalent formulation of it
would be to require the existence of a Radon measure $\theta$
bounding from above all measures $|D_x F(\cdot,z)|$. Taking the
least possible $\theta$ has some advantages, as the following remark
shows.

\begin{remark}\label{proprietadisigma} {\rm The measure $\sigma$ in \eqref{defsigma}
vanishes on every $\Haus{n-1}$-negligible and on every purely
$(n-1)$-unrectifiable set (i.e. a set $B$ such that
$\Haus{n-1}(B\cap\Gamma)=0$ whenever $\Gamma$ is countably
$\Haus{n-1}$-rectifiable). Indeed, these properties are valid for
all the measures $|D_x F(\cdot,z)|$ thanks to the coarea formula,
see for instance \cite[Theorem~3.40]{AFP}.}
\end{remark}

We can now canonically build a countably $\Haus{n-1}$-rectifiable
set $B_\sigma$ containing all jump sets of $F(\cdot,z)$ as follows.
Indeed, we define
\begin{equation}\label{bsigma}
B_\sigma=\bigl\{x: \limsup_{r\downarrow 0}
\frac{\sigma(B_r(x))}{\omega_{n-1}r^{n-1}}
>0\bigr\}.
\end{equation}
Writing $B_\sigma$ as the union of the sets
\[
\bigl\{x\in B_k(0):\ \limsup_{r\downarrow  0}
\frac{\sigma(B_r(x))}{\omega_{n-1}r^{n-1}}
>\frac{1}{k}\bigr\}\qquad k\geq 1
\]
it is immediate to check that $B_\sigma$ is $\sigma$-finite with
respect to $\Haus{n-1}$. Now, according to \cite[Page~252]{Fed},
we can split $B_\sigma$ as a disjoint union $B_\sigma=L\cup U$ with
$L$ countably $\Haus{n-1}$-rectifiable and $U$ purely
$(n-1)$-unrectifiable. In order to prove the rectifiability of
$B_\sigma$, we are thus led to show that $\Haus{n-1}(U)=0$. To see
this notice that
\[
\Haus{n-1}\left(U\cap\left\{\limsup_{r\downarrow 0}
\frac{\sigma(B_r(x))}{\omega_{n-1}r^{n-1}} \ge \varepsilon\right\}\right)\le
\frac 1\varepsilon\sigma(U)=0
\]
thanks to Remark~\ref{proprietadisigma}. By \eqref{inclju} and the
inequality $\sigma\geq|D_x F(\cdot,z)|$ we know that $B_\sigma\supset
J_{F(\cdot,z)}$ for all $z\in\R^h$.

The main result of the paper is the following chain rule.

\begin{theorem}\label{chain rule}
Let $F$ be satisfying (a), (b), (H1)-(H2)-(H3)-(H4) above.
Then there exists a countably $\Haus{n-1}$-rectifiable set $\mathcal N_F$ such that,
for any function $u\in BV_{\rm loc}(\R^n;\R^h)$,
the function $v(x)\defeq F(x,u(x))$ belongs to $
BV_{\rm loc}(\R^n)$ and the following chain rule holds:
\begin{itemize}
\item[(i)] (diffuse part) $|Dv|\ll\sigma+|Du|$ and, for any Radon measure $\mu$
such that $\sigma+|Du|\aac\mu$, it holds
\begin{equation}\label{diffuse}
\frac{d\widetilde{D}v}{d\mu} =\frac{d\widetilde{D}_x F(\cdot,\tilde
u(x))}{d\mu} +\nabla_z\tilde F(x,\tilde u(x))\frac{d\widetilde{D}
u}{d\mu}\qquad\text{$\mu$-a.e. in $\R^n$.}
\end{equation}
\item[(ii)] (jump part)
$J_v\subset \mathcal N_F\cup J_u$
and, denoting by $u^\pm(x)$ and $F^\pm(x,z)$ the one-sided traces of
$u$ and $F(\cdot,z)$ induced by a suitable orientation of $\mathcal
N_F\cup J_u$, it holds
\begin{equation}\label{jump}
D^j v=\big(F^+(x,u^+(x))-F^-(x,u^-(x)\big)\nu_{\mathcal N_F\cup
J_u}\Haus{n-1}\res \mathcal (\mathcal N_F\cup J_u)
\end{equation}
in the sense of measures.
\end{itemize}
Moreover for a.e. $x$ the map $y\mapsto F(y,u(x))$ is
approximately differentiable at $x$ and
\begin{equation}\label{eq:appdif}
\nabla v(x)=\nabla_xF(x,u(x))+\nabla_zF(x, u(x))\nabla u(x) \qquad
\text{$\Leb{n}$-a.e. in $\R^n$\,.}
\end{equation}
\end{theorem}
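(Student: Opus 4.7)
The plan is to proceed in three stages: first set up the rectifiable set $\mathcal N_F$ and dispose of the $BV_{\rm loc}$ membership and the jump part; then treat the diffuse part by blow-up (the main difficulty); finally specialize to obtain the $\Leb n$-a.e. formula \eqref{eq:appdif}. For the setup, Lemma~\ref{vBV} gives $v \in BV_{\rm loc}$ together with a total variation estimate of the form $|Dv| \leq M|Du| + \sigma$, so in particular $|Dv| \aac \sigma + |Du|$. I would take $\mathcal N_F$ to be an $\Haus{n-1}$-rectifiable representative of the set $B_\sigma$ from \eqref{bsigma}, possibly enlarged by an $\Haus{n-1}$-negligible set so that, by Proposition~\ref{convuniforme}, at $\Haus{n-1}$-a.e.\ $x \in \mathcal N_F$ the one-sided traces $F^\pm(x,z)$ exist simultaneously for every $z \in \R^h$ and depend continuously on $z$ uniformly on compact sets. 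The inclusion $J_v \subset \mathcal N_F \cup J_u$ then follows from the total variation estimate combined with \eqref{inclju}--\eqref{inclju2} and Remark~\ref{proprietadisigma}, since neither $\sigma$ nor $|Du|$ can concentrate on $\Haus{n-1}$-finite sets outside $\mathcal N_F \cup J_u$. For \eqref{jump}, at $\Haus{n-1}$-a.e.\ point $x \in \mathcal N_F \cup J_u$ I would use the decomposition
\[
v(y) - F^\pm(x, u^\pm(x)) = \bigl[F(y,u(y)) - F(y, u^\pm(x))\bigr] + \bigl[F(y, u^\pm(x)) - F^\pm(x, u^\pm(x))\bigr],
\]
bound the first bracket by $M|u(y) - u^\pm(x)|$ via (H1), and apply Proposition~\ref{convuniforme} to the second; averaging on $B^\pm(x,r)$ as $r \downarrow 0$ yields one-sided traces $v^\pm(x) = F^\pm(x, u^\pm(x))$, so \eqref{jump} follows from \eqref{rappDjSigma}.

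For the diffuse part I fix a Radon measure $\mu$ with $\sigma + |Du| \aac \mu$ and work at a $\mu$-a.e.\ point $x_0 \notin \mathcal N_F \cup J_u$; write $z_0 := \tilde u(x_0)$. Following the blow-up scheme of \cite[Thm.~3.96]{AFP}, the target is to show that the signed measure
\[
E := \widetilde{D} v - \widetilde{D}_x F(\cdot, z_0) - \nabla_z F(\cdot, z_0)\, \widetilde{D} u
\]
satisfies $|E|(B_r(x_0)) = o(\mu(B_r(x_0)))$ as $r \downarrow 0$. Structurally, the error splits into two pieces,
\[
\bigl(\nabla_z F(\cdot, u) - \nabla_z F(\cdot, z_0)\bigr)\, \widetilde{D} u \qquad \text{and} \qquad \widetilde{D}_x F(\cdot, u) - \widetilde{D}_x F(\cdot, z_0),
\]
which (H2) and (H3) bound respectively by $\tilde\omega_H(|u - z_0|)\, |\widetilde{D} u|$ and $\omega_H(|u - z_0|)\, \lambda_H$. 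Since $u$ is approximately continuous at $x_0$ with value $z_0$, the averages of $|u - z_0|$ on $B_r(x_0)$ tend to zero, and $\mu$-Lebesgue differentiation applied to $|\widetilde{D} u|$ and $\lambda_H$ then forces both error terms to be $o(\mu(B_r(x_0)))$. Dividing the defining equation for $E$ by $\mu(B_r(x_0))$ and passing to the limit yields \eqref{diffuse}, with $\nabla_z F(x_0, z_0) = \nabla_z \tilde F(x_0, z_0)$ at $\mu$-a.e.\ $x_0$ by (H1)--(H2). The main obstacle is to make the coupling of the moduli $\tilde\omega_H$, $\omega_H$ with the blow-up rigorous: because $\nabla_z F(x, z_0)$ need not be continuous or even approximately continuous in $x$, a naive smoothing of $u$ does not work, and the argument must be carried out directly at the measure level, using estimates uniform in $z$ on compact sets (here the uniformity in (H2) and (H3) is essential).

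Finally, \eqref{eq:appdif} follows by applying the diffuse formula with $\mu = \Leb n$: at $\Leb n$-a.e.\ $x$ one has $\widetilde{D} u = \nabla u\, \Leb n$ and, via (H4) combined with Remark~\ref{proprietadisigma}, $\widetilde{D}_x F(\cdot, \tilde u(x)) = \nabla_x F(x, \tilde u(x))\, \Leb n$, so \eqref{diffuse} reduces pointwise to \eqref{eq:appdif}. The approximate differentiability of $y \mapsto F(y, u(x))$ at such $x$ is then obtained by applying the same blow-up with $u$ replaced by the constant value $u(x)$.
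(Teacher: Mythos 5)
Your setup (taking $\mathcal N_F = B_\sigma$ up to an $\Haus{n-1}$-negligible set), the $BV_{\rm loc}$ membership via Lemma~\ref{vBV}, and the jump part via Proposition~\ref{convuniforme}(ii) together with \eqref{rappDjSigma} all match the paper's proof. The $\Leb{n}$-a.e.\ formula also needs a small adjustment: one cannot take $\mu=\Leb n$, since $\sigma+|Du|\aac\Leb n$ is false in general; the paper keeps $\mu=\sigma+M|Du|$, multiplies both sides of \eqref{diffuse} by $d\mu/d\Leb n$, and then identifies $d\widetilde{D}_xF(\cdot,\tilde u(x))/d\Leb n$ with the approximate gradient via Lemma~\ref{lemma ac}. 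This is a fixable imprecision.

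The genuine gap is in the diffuse part. You announce the blow-up scheme of \cite[Thm.~3.96]{AFP} but what you actually write is a different (and circular) argument. You introduce
\[
E := \widetilde{D} v - \widetilde{D}_x F(\cdot, z_0) - \nabla_z F(\cdot, z_0)\, \widetilde{D} u
\]
and claim it ``splits structurally'' into $\bigl(\nabla_z F(\cdot,u)-\nabla_z F(\cdot,z_0)\bigr)\widetilde{D}u$ and $\widetilde{D}_xF(\cdot,u)-\widetilde{D}_xF(\cdot,z_0)$. That split is precisely the identity $\widetilde{D}v=\widetilde{D}_xF(\cdot,u)+\nabla_zF(\cdot,u)\widetilde{D}u$, i.e.\ the chain rule you are trying to prove, and the second summand $\widetilde{D}_xF(\cdot,u)$ is not even a well-defined object (it would require a prior chain-rule decomposition to make sense of it). Moreover, even granting the split, your proposed control is wrong in a second, independent way: you want to bound $\int_{B_r(x_0)}\tilde\omega_H(|u-z_0|)\,d|\widetilde{D}u|$ by $o(\mu(B_r(x_0)))$ using the fact that the $\Leb n$-averages of $|u-z_0|$ tend to zero, but $|\widetilde{D}u|$ is typically mutually singular with $\Leb n$, so the $\Leb n$-approximate continuity of $u$ at $x_0$ says nothing about the $|\widetilde{D}u|$-average of $\tilde\omega_H(|u-z_0|)$ near $x_0$.

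The paper avoids both problems by carrying out the blow-up at the level of \emph{functions}, not measures: one defines the rescaled quantities $u^r$, $F^r$, $v^r$ on $B_1(0)$, writes the exact algebraic identity
\[
v^r(y)=\nabla_z F(x_0+ry,m_r(u))\,u^r(y)+F^r(y)+\overline R(y),
\]
coming from the mean value theorem in the $z$-variable with $z$-centering at the scalar $m_r(u)$ (not at $u(y)$ itself); the remainder $\overline R$ then tends to $0$ in $L^1(B_1(0))$ by (H2) together with the uniform integrability of $u^r$ given by the Poincar\'e inequality. Compactness in $L^1$ of $u^r$ and $F^r$ and the weak-$*$ convergence $\nabla_z F(x_0+r\cdot,m_r(u))\weakstarto\nabla_z\tilde F(x_0,\tilde u(x_0))$ (Lemma~\ref{Convgrad} and (H2)) then give $v^0=F^0+\nabla_z\tilde F(x_0,\tilde u(x_0))u^0$, and \eqref{diffuse} is extracted by computing $DF^0$, $Du^0$, $Dv^0$ on balls using (H3), the tangent measure $\nu\in{\rm Tan}(\mu,x_0)$, and the density-one condition for $\R^n\setminus\tilde J$. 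None of this machinery appears in your sketch, so as written the diffuse part does not go through.
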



Here (and in the sequel) the expression
\[
\frac{d\widetilde{D}_x F(\cdot,\tilde u(x))}{d\mu}
\]
means the pointwise density of the measure $\widetilde{D}_x F(\cdot,z)$
with respect to $\mu$, computed choosing $z=\tilde u(x)$ (notice
that the composition is Borel measurable thanks to the
Scorza-Dragoni Theorem and Lemma~\ref{serve} below). Analogously,
the expression $\tilde F(x,z)$ is well defined at points $x$ such
that $x\notin S_{F(\cdot,z)}$ and we will prove that,
$\nabla_z\tilde F(x,z)$ is well defined for all $z$ out of a
countably $\Haus{n-1}$-rectifiable set of points $x$.

\begin{remark}\label{misure}{\rm It is an easy exercise of measure theory to see that
\eqref{diffuse} holds for \emph{any} Radon measure $\mu$ such that
$\sigma+|Du|\aac\mu$ if and only if it holds for \emph{one} such
measure. For this reason, we shall prove the formula with
$\mu=\sigma+M|Du|$.}
\end{remark}
%



In the Sobolev framework the following chain rule holds (see \cite{DCL} for a similar result
relative to vector fields with controlled divergence).

\begin{corollary}\label{coroll}
Let $F$ be satisfying (b), (H1), (H2) above and the following conditions:
\begin{enumerate}
\item[(a)']
The function $x\mapsto F(x,z)$ belongs to $W^{1,1}_{loc}(\R^n)$ for all $z\in\R^h$;
\item[(H3)']For any compact set $H\subset\R^h$ there exist a positive function
$g_H\in L^1_{\rm loc}(\R^n)$ and a modulus of continuity $\omega_H$ such that
\[
|\nabla_x F(x,z)-\nabla_x F(x,z')|\le
\omega_H(|z-z'|)g_H(x)
\]
for all $z,\,z'\in H$ and  for a.e. $x\in\R^n$;
\item[(H4)']  There exists a positive function
$g\in L^1_{\rm loc}(\R^n)$ such that
\[
 |\nabla_x F(x,z)|\leq g(x)
\]
for all $z\in \R^h$ and  for a.e.\ $x\in\R^n$.
\end{enumerate}
Then for any function $u\in W^{1,1}_{\rm loc}(\R^n;\R^h)$
the function $v(x) \defeq F(x,u(x))$ belongs to
$W^{1,1}_{\rm loc}(\R^n)$ and
\begin{equation}\label{eq:chainSobolev}
\nabla v(x)=\nabla_x F(x,u(x))+\nabla_z F(x,u(x))\nabla u(x) \qquad
\text{$\Leb{n}$-a.e. in $\R^n$}.
\end{equation}
\end{corollary}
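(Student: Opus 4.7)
The plan is to derive Corollary \ref{coroll} directly from Theorem \ref{chain rule} by verifying that the Sobolev-type hypotheses (a'), (H3'), (H4') imply the BV-type hypotheses (a), (H3), (H4), and then exploiting the fact that all relevant measures are absolutely continuous with respect to $\Leb{n}$ to upgrade the conclusion from $BV_{\rm loc}$ to $W^{1,1}_{\rm loc}$.

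First I would check the transfer of hypotheses. Clearly $(a')\Rightarrow(a)$ since $W^{1,1}_{\rm loc}\subset BV_{\rm loc}$. For (H4), since $|D_x F(\cdot,z)|=|\nabla_x F(\cdot,z)|\Leb{n}\le g\,\Leb{n}$ by (H4'), the measure $\sigma$ in \eqref{defsigma} satisfies $\sigma\le g\,\Leb{n}$ and is in particular a Radon measure. For (H3), when $F(\cdot,z)\in W^{1,1}_{\rm loc}$ the whole distributional derivative is diffuse and absolutely continuous, so
\[
|\widetilde{D}_x F(\cdot,z)(A)-\widetilde{D}_x F(\cdot,z')(A)|
=\Big|\int_A\big(\nabla_x F(x,z)-\nabla_x F(x,z')\big) dx\Big|
\le\omega_H(|z-z'|)\int_A g_H(x) dx,
\]
so (H3) holds with $\lambda_H\defeq g_H\Leb{n}$.

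Next I would apply Theorem \ref{chain rule} to conclude that $v\in BV_{\rm loc}(\R^n)$ and that $|Dv|\ll\sigma+|Du|$. Since $u\in W^{1,1}_{\rm loc}$, the measure $|Du|=|\nabla u|\,\Leb{n}$ is absolutely continuous with respect to $\Leb{n}$, and we just saw that $\sigma\ll\Leb{n}$ as well. Therefore $|Dv|\ll\Leb{n}$, which means that $Dv$ has neither Cantor nor jump part and hence $v\in W^{1,1}_{\rm loc}(\R^n)$. The pointwise formula \eqref{eq:chainSobolev} is then exactly the almost-everywhere identity \eqref{eq:appdif} already provided by Theorem \ref{chain rule}.

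There is no real obstacle here: the argument is essentially a bookkeeping reduction. The one point where a little care is needed is to notice that in the Sobolev regime the diffuse derivative coincides with the absolutely continuous one, so that the hypothesis (H3') (formulated only on $\nabla_x F$) automatically controls the full $\widetilde{D}_x F$, and that the bound $|Dv|\ll\sigma+|Du|$ furnished by the BV chain rule is strong enough, together with (H4'), to exclude any singular part in $Dv$ and thus to promote $v$ from $BV_{\rm loc}$ to $W^{1,1}_{\rm loc}$.
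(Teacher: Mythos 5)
Your argument is correct and is precisely the reduction the paper has in mind (the corollary is stated without an explicit proof, being regarded as an immediate consequence of Theorem~\ref{chain rule}). You correctly verify that (a$'$), (H3$'$), (H4$'$) imply (a), (H3), (H4) with $\lambda_H=g_H\Leb{n}$ and $\sigma\le g\Leb{n}$, and then use $|Dv|\ll\sigma+|Du|\ll\Leb{n}$ together with \eqref{eq:appdif} to upgrade $v$ to $W^{1,1}_{\rm loc}$ and obtain the pointwise formula.
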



\begin{remark}[Locally bounded functions in domains $\Omega$]\label{localH1}
We stated the results for globally defined functions $u$, but
obviously it extends to the case when the domain is an open set
$\Omega$. In addition, if $u$ is locally bounded in $\Omega$, then
Theorem~\ref{chain rule} holds true replacing all the assumptions
with their local counterpart, we show for instance how (H1) and (H4)
should be modified:
\begin{itemize}
\item[(H1-loc)] For any pair of compact sets $K\subset\Omega$ and $H\subset\R^h$
there exists a constant $M_{K,H}$ such that $|\nabla_z F(x,z)| \leq
M_{K,H}$ for every $x\in K\setminus C_F$ and $z\in H$.
\item[(H4-loc)] For any compact set $H\subset\R^h$ the measure
\begin{equation}\label{defsigmaH}
\sigma_H\defeq \bigvee_{z\in H} |D_x F(\cdot,z)|
\end{equation}
is a Radon measure in $\Omega$.
\end{itemize}
In this local case we can define $\mathcal{N}_F \defeq \cup_{j\geq
1} \mathcal{N}_j$, where
\[
\mathcal{N}_j \defeq
\bigl\{x: \limsup_{r\downarrow 0}
\frac{\sigma_j(B_r(x))}{\omega_{n-1}r^{n-1}}
>0\bigr\}\,,\qquad
\sigma_j \defeq \bigvee_{z\in B_j(0)} |D_x F(\cdot,z)|\,,
\]
and, in the any subdomain $\Omega'\Subset\Omega$ where $|u|\leq j$,
the measure $\sigma$ in Theorem~\ref{chain rule}(i) should be
replaced by $\sigma_j$.
\end{remark}

\begin{remark}\label{compare}
In \cite{CDC} the one-dimensional analogous (i.e.\ for $n=1$) of
Theorem~\ref{chain rule} has been proved under the following structural assumptions:
\begin{itemize}
\item[(A1)] There exists a countable set $\mathcal{N}_F$
containing $S_{F(\cdot, z)}$ for every $z\in\R^h$.
\item[(A2)] For every compact set $H\subset\R^h$ there exists a Radon measure
$\lambda_H$ such that
\[
|D_x F(\cdot, z)(A) - D_x F(\cdot, z')(A)| \leq |z-z'| \lambda_H(A)
\]
for all $z,z'\in H$ and every Borel set $A\subset\R$.
\item[(A3)] For every compact set $H\subset\R^h$ there exists a constant $M_H$
such that $|\nabla_z F(x,z)| \leq M_H$ for every $x\in\R\setminus\mathcal{N}_F$
and every $z\in H$.
\item[(A4)] $x\mapsto \nabla_z F(x,z)$ belongs to $BV(\R)$ for every $z\in\R^h$.
\item[(A5)] There exists a positive finite Cantor measure $\lambda$
(i.e., a measure whose diffuse part is orthogonal to the Lebesgue measure)
such that $D^c_x F(\cdot, z) \ll \lambda$ for every $z\in\R^h$.
\end{itemize}
It is apparent that the new assumption (H4) allows to drop both
assumptions (A1) (see the construction of the set $B_{\sigma}$ in
\eqref{bsigma}) and (A5). Moreover, assumption (H3) involves only
the diffuse part of the measure $|D_x F(\cdot, z)|$, and so it is
weaker than (A2). The other assumptions are almost equivalent (even
the ``local'' assumption (A3) is equivalent to (H1) since a $BV$
function of the real line is locally bounded).

\end{remark}


We first prove that under assumptions (H1) and (H4),
the composite function $v$ belongs to $BV_{\rm loc}(\R^n)$.

\begin{lemma}\label{vBV}
The function $v$ belongs to $BV_{\rm loc}(\R^n)$ and $|D v|\leq\sigma+M|Du|$.
\end{lemma}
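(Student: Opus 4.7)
My plan is to obtain integrability of $v$ directly from (H1), and then bound $|Dv|$ by smoothing $F$ in the $x$-variable, applying the classical $BV$ chain rule to the regularised composite, and passing to the limit via lower semicontinuity.

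\emph{Integrability.} The bound $|v(x)| \leq |F(x, 0)| + M|u(x)|$ is immediate from (H1); since $F(\cdot, 0) \in BV_{\rm loc}(\R^n) \subset L^1_{\rm loc}$ and $u \in L^1_{\rm loc}$, we conclude $v \in L^1_{\rm loc}(\R^n)$ (with Borel measurability from Scorza--Dragoni, as mentioned right after the theorem statement).

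\emph{Smoothing $F$ in $x$ and the chain rule.} Let $\rho_\varepsilon$ be a standard mollifier and set $F^\varepsilon(x, z) \defeq (\rho_\varepsilon * F(\cdot, z))(x)$. Then $F^\varepsilon(\cdot, z) \in C^\infty(\R^n)$, $F^\varepsilon$ remains $C^1$ in $z$ with $|\nabla_z F^\varepsilon| \leq M$ (differentiating under the convolution integral and using (H1)), and (H4) gives the pointwise estimate $|\nabla_x F^\varepsilon(x, z)| \leq (\rho_\varepsilon * \sigma)(x)$ uniformly in $z$. Setting $v^\varepsilon(x) \defeq F^\varepsilon(x, u(x))$, I view $F^\varepsilon$ as a $C^1$ function on $\R^{n+h}$ (globally Lipschitz after a cutoff outside a compact set containing the essential values of $U(x) \defeq (x, u(x))$) composed with the $BV_{\rm loc}$ map $U$, whose distributional derivative splits into the identity matrix times $\Leb{n}$ on the first $n$ coordinates and $Du$ on the last $h$. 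The classical chain rule \cite[Thm.~3.96]{AFP} yields $v^\varepsilon \in BV_{\rm loc}$ with
\begin{equation*}
|Dv^\varepsilon|(A) \leq \int_A |\nabla_x F^\varepsilon(x, u(x))|\, d\Leb{n} + M|Du|(A) \leq \sigma(A + B_\varepsilon) + M|Du|(A)
\end{equation*}
for every open $A \subset \R^n$, the last inequality coming from Fubini applied to the convolution bound on $\nabla_x F^\varepsilon$.

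\emph{Limit passage.} To let $\varepsilon \to 0$, I first show $v^\varepsilon \to v$ in $L^1_{\rm loc}$: on a bounded open $K$, split off the small-measure set $\{|u| > R\} \cap K$ using (H1) and absolute continuity of the integral of $1 + |u|$, then cover $\overline{B_R} \subset \R^h$ by finitely many cubes $Q_j$ of diameter $\delta$ with centres $z_j$; the $M$-Lipschitz bound in $z$ reduces $v^\varepsilon - v$ on $\{|u| \leq R\} \cap K$ to an error of order $\delta$ plus a finite sum of the differences $F^\varepsilon(\cdot, z_j) - F(\cdot, z_j)$, each vanishing in $L^1_{\rm loc}$ by standard mollification, after which $\delta$ can be sent to $0$. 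Lower semicontinuity of the total variation under $L^1_{\rm loc}$ convergence then gives $|Dv|(A) \leq \liminf_\varepsilon |Dv^\varepsilon|(A) \leq \sigma(\overline{A}) + M|Du|(A)$ for open $A$, and outer regularity of Radon measures together with $\sigma(A + B_\varepsilon) \downarrow \sigma(\overline{A})$ as $\varepsilon \downarrow 0$ upgrades this to $|Dv| \leq \sigma + M|Du|$ as measures.

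\emph{Main obstacle.} The delicate point is the application of the classical chain rule to $F^\varepsilon$: while $|\nabla_z F^\varepsilon| \leq M$ is global, $|\nabla_x F^\varepsilon|$ is bounded only locally with constants depending on $\varepsilon$. A standard cutoff/localisation argument — modifying $F^\varepsilon$ to be globally Lipschitz outside a large compact neighbourhood of the range of $U$ — handles this without affecting the final estimate.
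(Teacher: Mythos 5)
Your proposal is correct and shares the paper's core strategy of mollifying $F$ in the $x$ variable, deriving a bound for the mollified composite, and passing to the limit via lower semicontinuity of total variation, but the two technical passages where it diverges are worth comparing. For the bound on $|Dv^\varepsilon|$, you apply the vectorial $BV$ chain rule \cite[Thm.~3.96]{AFP} to $F^\varepsilon\circ U$ with $U(x)=(x,u(x))$, which forces the localisation issue you flag at the end; note that your description of the cutoff is slightly off, since the range of $U$ over a compact $K\subset\R^n$ is contained in $K\times\R^h$ and is \emph{not} bounded when $u$ is unbounded, so one cannot cut off ``outside a compact neighbourhood of the range of $U$.'' The correct device is to cut off only in $x$, e.g.\ replacing $F^\varepsilon$ by $\chi(x)\bigl(F^\varepsilon(x,z)-F^\varepsilon(x_0,z)\bigr)+F^\varepsilon(x_0,z)$ with $\chi$ a smooth cutoff equal to $1$ on $K$, which is globally Lipschitz by (H1) and the local bound on $\nabla_xF^\varepsilon$; this is fixable but genuinely delicate. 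The paper sidesteps it entirely by first taking $u\in C^1$, applying the ordinary calculus chain rule to $F_\varepsilon(x,u(x))$, and only then approximating $u$ in $BV$ by mollification -- a two-stage approximation that never needs the vectorial $BV$ chain rule. For the $L^1_{\rm loc}$ convergence $v^\varepsilon\to v$, your truncation $\{|u|>R\}$ plus cube-covering argument is a valid alternative to the paper's approach, which instead shows that the Lebesgue-point sets $A_z$ have a common full-measure intersection by proving the averages $\meantext{B_r(x)}F(y,z)\,dy$ are Cauchy via a countable dense set in $z$ and the $M$-Lipschitz bound; your argument is arguably more elementary, while the paper's yields a pointwise statement that is reused later (e.g.\ in Proposition~\ref{convuniforme}). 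Both routes deliver the lemma; yours buys economy in the approximation step at the cost of having to justify a cutoff, theirs keeps the outer function globally well-behaved at the cost of one more limit.
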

\begin{proof} It is obvious that $v\in L_{\rm loc}^1(\R^n)$, so we only have to check that
$|D v|(K)<\infty$ for any compact set $K$. In order to do this
choose a standard family of mollifiers $\varrho_\varepsilon$ and
define
\[
F_\varepsilon(x,z)=F(\cdot,z)\ast\varrho_\epsilon(x)=
\int\varrho_\varepsilon(x-x')F(x',z)
dx'
\]
which is a $C^1$ function satisfying $\sup_{(x,z)} |\nabla_z
F_\varepsilon(x,z)|\le M$ and set
$v_\varepsilon(x)=F_\varepsilon(x,u(x))$. If $u\in C^1$ the standard
chain rule and the inequality
$$
\sup_z|\nabla_xF_\varepsilon(x,z)|\leq\sigma\ast\varrho_\epsilon(x)
$$
give, for any compact set $K$,
\begin{eqnarray*}
\int_K|\nabla v_\epsilon|\,dx&\leq&\int_K|\nabla_x
F_\varepsilon(x,u(x))|\,dx+\int_K|\nabla_z
F_\varepsilon(x,u(x))||\nabla u(x)|\,dx
\\&\leq&\int_K\sigma\ast\varrho_\epsilon(x)\,dx+M\int_K|\nabla u|\,dx.
\end{eqnarray*}
By approximation the same inequality holds, now with $|Du|(K)$ in
place of $\int_K|\nabla u|\,dx$, if $u\in BV_{\rm loc}$. Eventually
we can use the arbitrariness of $K$ to get
$|Dv_\varepsilon|\leq\sigma\ast\varrho_\epsilon\Leb{n}+M|Du|$.

The only thing to check is that $v_\varepsilon\to v$ in $L^1_{\rm
loc}(\R^n)$, to do this it suffices to check the pointwise
convergence since
\[
|v_\varepsilon(x)|\le
|F(\cdot,0)|\ast\varrho_\varepsilon(x)+M|u|\ast\varrho_\varepsilon(x)
\]
and the right-hand side is convergent in $L^1_{\rm loc}$. We know
that for fixed $z\in\R^h$ it holds that
\[
\lim_{\varepsilon \to 0} F_\varepsilon(x,z)= F(x,z)
\]
for every $x\in A_z$ where $A_z$ is the set of Lebesgue points of
$x\to F(x,z)$. To prove the almost everywhere convergence of
$F_\varepsilon (x,u(x))\to F(x,u(x))$ it is thus enough to show that
\[
\Leb{n}\Big(\bigcup_{z\in \R^h} (\R^n\setminus A_z)\Big)=0.
\]
To see this let us choose a countable dense set $U\subset \R^h$; we
claim that
\[
\bigcap_{z\in U} A_z=\bigcap_{z\in \R^h} A_z.
\]
Indeed, if $x\in\bigcap\limits_{z\in U} A_z$, $z\in\R^h$ and $z_k
\in U$ converges to $z$, then for all $0<r<R$ it holds
\[
\begin{split}
&\Big|\mean{B_R(x)}F(y,z)\,dy-\mean{B_r(x)}F(y,z)\,dy\Big|
\le \mean{B_R(x)}|F(y,z)-F(y,z_k)|\,dy\\
&+\Big|\mean{B_R(x)}F(y,z_k)\,dy-\mean{B_r(x)}F(y,z_k)\,dy\Big|
+ \mean{B_r(x)}|F(y,z)-F(y,z_k)|\,dy\\
&\le
2M|z_k-z|+\Big|\mean{B_R(x)}F(y,z_k)\,dy-\mean{B_r(x)}F(y,z_k)\,dy\Big|.
\end{split}
\]
This proves that the averages $\meantext{B_r(x)} F(y,z)\, dy$ are Cauchy
as $r\downarrow 0$ for every $x\in\bigcap\limits_{z\in U} A_z$ and
$z\in \R^h$. Denoting by $\tilde{F}$ the limit, in the same way one
can prove that
\[
\mean{B_r(x)}|F(y,z)-\tilde F(x,z)|\,dy \to 0.
\]
The claim now immediately follows.
\end{proof}

\begin{remark}\label{remH4}
Even in the Sobolev case the assumption (H1) is needed in order to
have $v\in W^{1,1}_{\rm loc}$. For instance, if $F(x,z) = f(x) z$,
with $f\in W^{1,1}_{\rm loc}(\mathbb{R}^n)$ not locally bounded, the
composite function $v(x) = f(x) u(x)$ may not be locally integrable,
unless $u$ is locally bounded. But, even in that case, the term
$f\nabla u$ might be not locally integrable.
\end{remark}

\begin{lemma}\label{serve} Any Radon measure $\mu$ in $\R^n$
is concentrated on a Borel set $A$ with the following property:
\begin{equation}\label{eq:densitiesmeaning}
\frac{d\widetilde{D}_x F(\cdot,z)}{d \mu}(y) =\lim_{r\downarrow
0}\frac{\widetilde{D}_x F(\cdot,z)(B_r(y))}{\mu(B_r(y))}
\end{equation}
exists for every $y\in A$, $z\in\R^h$ and is Borel-measurable in $y$
and continuous in $z$, more precisely it holds
\[
\Bigg |\frac{d\widetilde{D}_x F(\cdot,z_1)}{d \mu}(y)-\frac{d\widetilde{D}_x
F(\cdot,z_2)}{d \mu}(y)\Bigg|\le \omega_{H} (|z_1-z_2|)\frac{d
\lambda_H}{d \mu}(y)
\]
for any $z_1,\,z_2 \in H$, $H\subset\R^h$ compact.
\end{lemma}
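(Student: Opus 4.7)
The plan is to apply the Besicovitch differentiation theorem along a countable dense set of parameters $z$ and then transfer the conclusion to all $z\in\R^h$ by means of the equicontinuity estimate (H3).

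First I would fix a countable dense set $D\subset \R^h$ and exhaust $\R^h$ by the compact balls $H_k\defeq\overline{B_k(0)}$. For each $z\in D$ the vector-valued Radon measure $\widetilde{D}_x F(\cdot,z)$ has, by Besicovitch differentiation, a $\mu$-a.e.\ defined density realized as the pointwise limit of $\widetilde{D}_x F(\cdot,z)(B_r(y))/\mu(B_r(y))$; the same is true for each $\lambda_{H_k}$. Intersecting all the corresponding $\mu$-full sets (a countable operation) yields a Borel set $A$ with $\mu(\R^n\setminus A)=0$ on which all these limits exist and are finite, and where in particular $d\lambda_{H_k}/d\mu(y)<+\infty$ for every $k$.

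Next, for $z_1,z_2\in D\cap H_k$, dividing the inequality in (H3) applied to $A=B_r(y)$ by $\mu(B_r(y))$ and passing to the limit $r\downarrow 0$ gives
\[
\Bigg|\frac{d\widetilde{D}_x F(\cdot,z_1)}{d\mu}(y)-\frac{d\widetilde{D}_x F(\cdot,z_2)}{d\mu}(y)\Bigg|\le \omega_{H_k}(|z_1-z_2|)\frac{d\lambda_{H_k}}{d\mu}(y)
\]
for every $y\in A$. Thus for each such $y$ the map $z\mapsto d\widetilde{D}_x F(\cdot,z)/d\mu(y)$ is uniformly continuous on $D\cap H_k$ and extends uniquely to a continuous map $\Phi(y,\cdot)\colon\R^h\to\R^n$; Borel measurability in $y$ is preserved under this pointwise limit.

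Finally, for arbitrary $z\in\R^h$ and $y\in A$, pick $z_n\in D\cap H_k$ with $z_n\to z$. Using (H3) once more at the level of balls and the already proven convergence at each $z_n$,
\[
\limsup_{r\downarrow 0}\Bigg|\frac{\widetilde{D}_x F(\cdot,z)(B_r(y))}{\mu(B_r(y))}-\Phi(y,z_n)\Bigg|\le \omega_{H_k}(|z-z_n|)\frac{d\lambda_{H_k}}{d\mu}(y),
\]
so letting $n\to\infty$ shows that the ball-average limit \eqref{eq:densitiesmeaning} exists and equals $\Phi(y,z)$. The modulus-of-continuity inequality in the statement is then immediate from the displayed estimate. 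The only real subtlety is the need to trap a single $\mu$-full Borel set $A$ that simultaneously witnesses differentiability for \emph{every} parameter $z$; the passage through a countable dense set together with (H3) as an equicontinuity bound is precisely what makes this possible.
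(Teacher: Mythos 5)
Your proof is correct and follows essentially the same route as the paper: apply Besicovitch differentiation along a countable dense set of parameters together with the $\lambda_{H_k}$'s, intersect the countably many $\mu$-full sets, use (H3) as an equicontinuity bound to extend the pointwise densities continuously in $z$, and then verify via a three-term splitting at the level of balls that the extension is indeed the ball-average limit for all $z$. The only cosmetic difference is that the paper works on a single fixed compact $H$ and then intersects over a countable exhaustion at the end, while you exhaust by $\overline{B_k(0)}$ from the outset; the content is identical.
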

\begin{proof} Let us fix a compact $H\subset\R^h$, we will show that statement of the lemma
holds for any $z\in H$ and $y\in A_H$ with $\mu(\R^n\setminus
A_H)=0$. Then, writing $\R^h$ as a countable union of compact sets
$H_k$, we obtain the thesis with $A\defeq\bigcap_k A_{H_k}$. In the
sequel $H$ will be fixed and when referring to the measure
$\lambda_H$ and the modulus $\omega_H$ appearing in (H3) we will drop the subscript.

Let $U\subset H$ be a countable dense set, then there exists a set
$A$ with $\mu(\R^n\setminus A)=0 $ such that for every $z\in U$ and
every $y\in A$ the following limit exists
\[
f(y,z):=\lim_{r\downarrow 0}\frac{\widetilde{D}_x
F(\cdot,z)(B_r(y))}{\mu(B_r(y))}.
\]
Moreover, possibly removing from $A$ a $\mu$-negligible set and using
Besicovitch differentiation theorem, we can assume that for every
$y\in A$ there exists the limit $\lim_r \lambda(B_r(y))/\mu(B_r(y))$
and coincides with a fixed version of the Radon-Nikod\'ym derivative
$d \lambda/d \mu$. We claim that any $z\in H$ has the required property:
indeed, for any such point $y$ we have
\[
|f(y,z_1)-f(y,z_2)|\le \omega(|z_1-z_2|)\frac{d\lambda}{d\mu}(y)
\]
for every $z_1,\,z_2\in U$. Let choose now $z\in H$ and $z_k\in U$,
$z_k\to z$, and define $f(y,z)=\lim_k f(y,z_k)$, which exists and
does not depend on $(z_k)$. Then
\[
\begin{split}
&\left|\frac{\widetilde {D}_x F(\cdot,z)( B_r(y))}{\mu(B_r(y))}-f(y,z)\right|\\
&\leq\left|\frac{\widetilde {D}_x F(\cdot,z_k)(
B_r(y))}{\mu(B_r(y))}-f(y,z_k)\right|+\omega(|z_k-z|)\frac{\lambda(B_r(y))}{\mu(B_r(y))}
+|f(y,z_k)-f(y,z)|.
\end{split}
\]
Passing to the limit first as $r\downarrow 0$ and then as
$k\to\infty$ we obtain the thesis. Borel measurability in $y$ and continuity in $z$ easily follow.
\end{proof}

\section{Fine properties of $F$}\label{s:fine}

The proof of the following lemma, concerning differentiability of
integrals depending on a parameter, is a direct consequence of the
dominated convergence theorem.

\begin{lemma}\label{medie} If $A\subset\R^n$ is a bounded measurable set, the function
\[
z\mapsto m_A(z)\defeq \mean{A}F(x,z) dx
\]
is continuously differentiable in $\R^h$ and with bounded gradient
given by
\[
\nabla_z m_A(z)=\mean A \nabla_z F(x,z)dx.
\]
In particular $|m_A(z)-m_A(z')|\le M|z-z'|$ and, if $\tilde\omega_H$ is
as in (H2), then
\begin{equation}\label{medie1}
|\nabla_z m_A(z)-\nabla_z m_A(z')|\le\tilde\omega_H(|z-z'|)
\qquad\text{for all $z,\,z'\in H$.}
\end{equation}
\end{lemma}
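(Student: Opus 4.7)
The plan is to realize $\nabla_z m_A$ by differentiating under the integral sign via dominated convergence, and then to read off the Lipschitz and modulus of continuity estimates from (H1) and (H2) respectively.

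\textbf{Step 1 (differentiability).} Fix $z \in \R^h$ and a unit vector $e \in \R^h$. For every $x \in \R^n \setminus C_F$, assumption (b) gives that $t \mapsto F(x, z + te)$ is $C^1$, hence the difference quotient
\[
\frac{F(x,z+te)-F(x,z)}{t} \xrightarrow[t\to 0]{} \nabla_z F(x,z)\cdot e.
\]
By (H1) and the mean value theorem applied to $t \mapsto F(x, z+te)$ on $x \notin C_F$, the difference quotients are uniformly bounded by $M$. Since $\Leb{n}(C_F)=0$ and $A$ is bounded, dominated convergence yields
\[
\lim_{t\to 0} \frac{m_A(z+te) - m_A(z)}{t} = \mean{A} \nabla_z F(x,z)\cdot e \, dx,
\]
so $m_A$ has all directional derivatives at $z$ and $\nabla_z m_A(z) = \mean A \nabla_z F(x,z)\,dx$. (Borel measurability of $x \mapsto \nabla_z F(x,z)$ follows from writing it as a pointwise limit of measurable difference quotients.) The bound $|\nabla_z m_A(z)|\le M$ is then immediate from (H1).

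\textbf{Step 2 (continuity of the gradient and the two estimates).} For $z, z' \in H$, using the formula from Step~1 and (H2),
\[
|\nabla_z m_A(z) - \nabla_z m_A(z')| \le \mean A |\nabla_z F(x,z) - \nabla_z F(x,z')|\,dx \le \tilde\omega_H(|z-z'|),
\]
which gives both \eqref{medie1} and the continuity of $z \mapsto \nabla_z m_A(z)$; together with the existence of directional derivatives from Step~1, this upgrades $m_A$ to a $C^1$ function on $\R^h$. For the global Lipschitz estimate, the fundamental theorem of calculus along the segment from $z$ to $z'$ combined with (H1) gives $|F(x,z)-F(x,z')|\le M|z-z'|$ for every $x\notin C_F$, whence
\[
|m_A(z)-m_A(z')| \le \mean A |F(x,z)-F(x,z')|\,dx \le M|z-z'|.
\]

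\textbf{Main obstacle.} The argument is essentially a textbook application of dominated convergence, so there is no deep obstacle. The only point requiring mild care is the joint measurability issue hidden behind writing $\mean A \nabla_z F(x,z)\,dx$: one should verify that for each fixed $z$ the map $x \mapsto \nabla_z F(x,z)$ is Borel (or Lebesgue) measurable, which is clear since it is the pointwise a.e. limit of the measurable quotients $(F(x, z + t_k e) - F(x,z))/t_k$ along a sequence $t_k \to 0$.
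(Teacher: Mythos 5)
Your proof is correct and is exactly the dominated-convergence argument the paper refers to when it says the lemma is ``a direct consequence of the dominated convergence theorem''; the paper simply omits the details you have spelled out. Nothing to add.
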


\begin{proposition}\label{convuniforme} The following two statements
hold:
\begin{enumerate}
\item[(i)] There exists a $\Haus{n-1}$-negligible set $A$ such that,
defining $B=B_\sigma\cup A$, for all $x\in\R^n\setminus B$ the
function $F(\cdot,z)$ is approximately continuous at $x$ for every
$z\in\R^h$ and the function $z\mapsto\tilde F(x,z)$ is $C^1$ with
bounded derivative.
\item[(ii)] Let $\Sigma$ be a countably $\Haus{n-1}$-rectifiable set oriented by
$\nu_\Sigma$. Then, for $\Haus{n-1}$-a.e. $x\in\Sigma$ the one-sided
limits $F^+(x,z)$ and $F^-(x,z)$ defined by
\[
\lim_{r\downarrow 0} \mean{B^\pm_r(x)}|F(y,z)-F^{\pm}(x,z)|dy=0
\]
exist for all $z\in\R^h$ and $z\mapsto F^\pm(x,z)$ is $C^1$ with
bounded derivative.
\end{enumerate}
\end{proposition}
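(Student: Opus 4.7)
The plan for (i) is the following. Take a countable dense set $U\subset\R^h$. For each $z\in U$, since $F(\cdot,z)\in BV_{\rm loc}$ and $J_{F(\cdot,z)}\subset B_\sigma$, the set $S_{F(\cdot,z)}\setminus B_\sigma$ is $\Haus{n-1}$-negligible. Let $A$ be the union of these sets over $z\in U$, still $\Haus{n-1}$-negligible, and set $B\defeq A\cup B_\sigma$. Then for every $x\notin B$ and every $z\in U$ the function $F(\cdot,z)$ is approximately continuous at $x$, so the average $m_{B_r(x)}(z)\defeq \mean{B_r(x)}F(y,z)\,dy$ converges to some $\tilde F(x,z)$ as $r\downarrow 0$.

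To extend this to all $z\in\R^h$, I invoke Lemma \ref{medie}: the functions $z\mapsto m_{B_r(x)}(z)$ are equi-Lipschitz with constant $M$, uniformly in $r$, so pointwise convergence on the dense set $U$ upgrades to uniform convergence on compact subsets of $\R^h$, and the limit $\tilde F(x,\cdot)$ is $M$-Lipschitz on $\R^h$. To promote convergence of means to approximate continuity at $x$ for an arbitrary $z$, I combine the a.e.\ bound $|F(y,z)-F(y,z')|\le M|z-z'|$ coming from (H1), the approximate continuity already known for $z'\in U$, and the Lipschitz bound on $\tilde F(x,\cdot)$, letting $z'\to z$ along $U$. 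For the $C^1$-regularity of $z\mapsto \tilde F(x,z)$, I use Lemma \ref{medie} together with (H2): the family $\nabla_z m_{B_r(x)}$ is equi-bounded by $M$ and equi-continuous with modulus $\tilde\omega_H$ on each compact of $\R^h$; Ascoli-Arzel\`a produces, along any sequence $r_k\downarrow 0$, a uniform subsequential limit, and the standard pairing with the uniform convergence $m_{B_r(x)}\to \tilde F(x,\cdot)$ yields $\tilde F(x,\cdot)\in C^1$ with $|\nabla_z\tilde F(x,\cdot)|\le M$ (independence from the subsequence then gives convergence of the whole family).

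Part (ii) runs in parallel, substituting the half-balls $B^\pm_r(x)$ for $B_r(x)$ and working on the rectifiable set $\Sigma$. For each fixed $z$, the standard trace theorem for $BV$ functions on rectifiable sets (see \cite[Theorem 3.77]{AFP}) ensures that $F^\pm(\cdot,z)$ exists at $\Haus{n-1}$-a.e.\ $x\in\Sigma$; taking $U\subset\R^h$ countable dense and removing a countable union of $\Haus{n-1}$-null subsets of $\Sigma$, one obtains a full $\Haus{n-1}$-measure subset of $\Sigma$ on which the traces $F^\pm(x,z)$ exist simultaneously for all $z\in U$. Lemma \ref{medie} applies verbatim to $m^\pm_r(x,z)\defeq \mean{B^\pm_r(x)}F(y,z)\,dy$, and the three moves of (i)---uniform convergence on compacts in $z$ via equi-Lipschitz bounds, upgrade to one-sided approximate continuity via the a.e.\ Lipschitz bound on $F(\cdot,\cdot)$ in $z$, and $C^1$ regularity via Ascoli-Arzel\`a applied to $\nabla_z m^\pm_r(x,\cdot)$---carry over without change.

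The main technical point is the passage from a countable dense set of $z$'s to all $z\in\R^h$: this is precisely where the uniform-in-$r$ Lipschitz estimate from (H1) (packaged in Lemma \ref{medie}) is essential to promote pointwise convergence on $U$ to uniform convergence on compacts, and where the modulus $\tilde\omega_H$ from (H2), again via Lemma \ref{medie}, is what forces $C^1$ dependence on $z$ rather than mere continuity. Assumptions (H3) and (H4) enter here only through the preliminary construction of the rectifiable set $B_\sigma$ containing all the jump sets $J_{F(\cdot,z)}$.
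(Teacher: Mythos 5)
Your proposal is correct and follows essentially the same route as the paper: in (i) you take a countable dense set $U\subset\R^h$, use the $\Haus{n-1}$-negligibility of $S_{F(\cdot,z)}\setminus J_{F(\cdot,z)}$ together with $J_{F(\cdot,z)}\subset B_\sigma$ to build the exceptional set $B$, upgrade from $z\in U$ to all $z\in\R^h$ via the uniform $M$-Lipschitz bound from (H1), and obtain $C^1$-dependence on $z$ from (H2) by differentiating the averages under the integral (Lemma~\ref{medie}) and passing to the limit in $r$; in (ii) you localize the same scheme to one-sided traces on $\Sigma$. Your only departures are cosmetic — defining $A=\bigcup_{z\in U}(S_{F(\cdot,z)}\setminus B_\sigma)$ instead of $\bigcup_{z\in U}(S_{F(\cdot,z)}\setminus J_{F(\cdot,z)})$, which yields the same $B$, and making explicit the Ascoli--Arzelà step that the paper leaves implicit in the phrase ``pass to the limit as $r\downarrow 0$ and use \eqref{medie1}.''
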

\begin{proof} (i) Choose a countable dense set $U$ in $\R^h$
and $$ A:=\bigcup_{z\in U}S_{F(\cdot,z)}\setminus J_{F(\cdot,z)}.$$
This set is clearly $\Haus{n-1}$-negligible and, since $B_\sigma$
contains all jump sets of $F(\cdot,z)$, $B=A\cup B_\sigma$ contains
all sets $S_{F(\cdot,z)}$, $z\in U$, so that all points in
$\R^n\setminus B$ are approximate continuity points of $F(\cdot,z)$,
$z\in U$. Using assumption (H1) and a density argument as in
Lemma~\ref{serve} we obtain that all functions $F(\cdot,z)$ have
approximate limits at all points in $\R^n\setminus B$. Since
Lemma~\ref{medie} gives
$$
\nabla_z \mean{B_r(x)}F(y,z) dy=\mean{B_r(x)}\nabla_z F(y,z) dy
$$
we can pass to the limit as $r\downarrow 0$ and use \eqref{medie1}
to obtain that $\nabla_z\tilde{F}(x,z)$ exists and is continuous.

\noindent (ii) Arguing as in the proof of (i), we can find a
$\Haus{n-1}$-negligible set $A\subset\Sigma$ such that for every
$x\in\Sigma\setminus A$ the limits
\[
 F^{\pm}(x,z)\defeq\lim_{r\downarrow 0} \mean{B^\pm_r(x)} F(y,z)dy
 \]
exist for every $z\in\R^h$ and
\[
\lim_{r\downarrow 0}\mean{B^\pm(x)}|F(y,z)-F^\pm(x,z)|dy=0
\qquad\forall z\in\R^h.
\]
In addition, $z\mapsto F^\pm(x,z) $ is continuously differentiable
in $\R^h$.
\end{proof}

Thanks to the previous proposition we know that the Borel map
$x\mapsto\nabla_z\tilde F(x,z)$ is well defined for every
$x\in\R^n\setminus B$ and $z\in\R^h$. Since $B$ is $\sigma$-finite
with respect to $\Haus{n-1}$, $\nabla_z \tilde{F}(x,\tilde{f}(x))$
is well defined for $|\widetilde{D}f|$-almost every point $x$ for every
$BV$ function $f$.

In the next lemma we provide a more precise expression for the
derivative of $z\mapsto\tilde{F}(x,z)$.

\begin{lemma}\label{Convgrad} Let $x\notin\cup_z S_{F(\cdot,z)}$.
Then, for all $z\in\R^h$ the function $y\mapsto\nabla_z F(y,z)$ is
approximately continuous at $x$ and
\begin{equation}\label{m1}
\nabla_z \tilde{F}(x,z)=\widetilde{\nabla_z F}(\cdot,z)(x),
\end{equation}
where $\widetilde{\nabla_z F}(\cdot,z)(x)$ is the approximate limit
at $x$ of $y\mapsto\nabla_z F(y,z)$. In particular, for all $z\in\R^h$
the functions
$$
G^r(y):=\nabla_z F(x+ry,z)
$$
weak$^*$ converge in $L^\infty(B_1(0))$ to $\nabla_z\tilde F(x,z)$
as $r\downarrow 0$.\footnote{{\color{blue} Quest'ultimo fatto e' una conseguenza diretta di
quanto detto in \eqref{m1}}}
\end{lemma}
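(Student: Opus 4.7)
The plan is first to show that $\tilde F(x,\cdot)$ is of class $C^1$, with $\nabla_z\tilde F(x,z)$ equal to the limit of the averages $\mean{B_r(x)}\nabla_z F(y,z)\,dy$, and then to upgrade this convergence of averages to genuine approximate continuity of $\nabla_z F(\cdot,z)$ at $x$. Once approximate continuity is proved, the weak$^*$ assertion for $G^r(y)=\nabla_z F(x+ry,z)$ follows by a routine change of variable, since $\|G^r\|_{L^\infty(B_1(0))}\le M$ by (H1), and $L^1$-strong convergence (which is what approximate continuity says, after rescaling) combined with a uniform $L^\infty$ bound implies weak$^*$ convergence in $L^\infty$ against any test in $L^1(B_1(0))$.

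For the $C^1$ claim, apply Lemma~\ref{medie} with $A=B_r(x)$: the functions $z\mapsto m_{B_r(x)}(z):=\mean{B_r(x)}F(y,z)\,dy$ are $C^1$ in $\R^h$ with derivatives $\mean{B_r(x)}\nabla_z F(y,z)\,dy$ uniformly bounded by $M$ and satisfying, on each compact $H\subset\R^h$, the modulus of continuity $\tilde\omega_H$ of (H2), uniformly in $r$. Since $x\notin S_{F(\cdot,z)}$ for every $z$, $m_{B_r(x)}(z)\to\tilde F(x,z)$ pointwise, and therefore uniformly on compact subsets of $\R^h$ thanks to the common Lipschitz bound $M$. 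By Arzel\`a--Ascoli, every subsequential limit of $\nabla_z m_{B_r(x)}(\cdot)$ is continuous, and the fundamental theorem of calculus applied to $m_{B_r(x)}(z')-m_{B_r(x)}(z)$ identifies it with $\nabla_z\tilde F(x,\cdot)$; hence the full family converges and
\[
\nabla_z\tilde F(x,z)\;=\;\lim_{r\downarrow 0}\;\mean{B_r(x)}\nabla_z F(y,z)\,dy\qquad\forall z\in\R^h.
\]

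The core step, which I expect to be the main obstacle, is promoting this convergence of averages to genuine approximate continuity: by itself, convergence of averages does not force approximate continuity, and one must exploit (H2) together with the approximate continuity of $F(\cdot,z')$ at $x$ for \emph{every} $z'$. The trick is to write $\nabla_z F(\cdot,z)$ as a uniformly small perturbation of a difference quotient of $F$, and to use that difference quotients of functions approximately continuous at $x$ are themselves approximately continuous at $x$. Fix a unit vector $e\in\R^h$ and a small $t>0$; by the fundamental theorem of calculus applied to $F(y,\cdot)$ (which is $C^1$ for $y\notin C_F$) and by (H2),
\[
\partial_e F(y,z) \;=\; \frac{F(y,z+te)-F(y,z)}{t}+R(y,t),\qquad |R(y,t)|\le\tilde\omega_H(t),
\]
for a.e.\ $y\in\R^n$, while the $C^1$ regularity of $\tilde F(x,\cdot)$ just established gives
\[
\partial_e\tilde F(x,z) \;=\; \frac{\tilde F(x,z+te)-\tilde F(x,z)}{t}+R_0(t),\qquad R_0(t)\to 0 \text{ as } t\downarrow 0.
\]
Subtracting these two identities, averaging over $B_r(x)$, and using the triangle inequality reduces the two quotient-like terms to
\[
\frac{1}{t}\mean{B_r(x)}|F(y,w)-\tilde F(x,w)|\,dy\qquad\text{for }w\in\{z,\,z+te\},
\]
each of which tends to $0$ as $r\downarrow 0$ by the hypothesis $x\notin\bigcup_{z'}S_{F(\cdot,z')}$. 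Hence
\[
\limsup_{r\downarrow 0}\mean{B_r(x)}\bigl|\partial_e F(y,z)-\partial_e\tilde F(x,z)\bigr|\,dy \;\le\; \tilde\omega_H(t)+|R_0(t)|,
\]
and letting $t\downarrow 0$ makes the right-hand side vanish. Applying this componentwise in $\R^h$ yields the approximate continuity of $\nabla_z F(\cdot,z)$ at $x$ with approximate limit $\nabla_z\tilde F(x,z)$, which is exactly~\eqref{m1}; the weak$^*$ convergence of $G^r$ then follows as explained in the first paragraph.
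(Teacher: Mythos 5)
Your proof is correct and follows essentially the same strategy as the paper's: you bound the difference quotient $t^{-1}(F(y,z+te)-F(y,z))$ against $\partial_e F(y,z)$ uniformly in $y$ via (H2), do the same for $\tilde F(x,\cdot)$, and then reduce the averaged difference to averages of $|F(\cdot,w)-\tilde F(x,w)|$ for $w\in\{z,z+te\}$, which vanish by $x\notin\bigcup_{z'}S_{F(\cdot,z')}$. The only cosmetic difference is that you first establish the $C^1$ regularity of $\tilde F(x,\cdot)$ explicitly (via Lemma~\ref{medie} and Arzel\`a--Ascoli), whereas the paper gets this from Proposition~\ref{convuniforme}(i) and instead works with an arbitrary subsequential limit $\xi$ of the difference quotients of $\tilde F(x,\cdot)$, showing a posteriori that $\xi$ is uniquely determined as the approximate limit of $\langle\nabla_z F(\cdot,z_0),e\rangle$; both routes are sound and of comparable length.
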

\begin{proof} Let $x$ a point where
\begin{equation}\label{m}
\mean{B_r(x)} |F(y,z)-F(x,z)| dz \to 0
\end{equation}
for every $z$. Fix a direction $e\in S^{n-1}$, $z_0\in\R^h$ and
consider for $h\in (0,1]$
\[
g_r(h):=\mean{B_r(x)}\bigl|\frac{F(y,z_0+he)-F(y,z_0)}{h}-\langle\nabla_z F(y,z_0),e\rangle\bigr| dy.
\]
We also consider a sequence $(h_i)\downarrow 0$ such that $h_i^{-1}(\tilde{F}(x,z_0+h_i)-\tilde{F}(x,z_0))$ converges to
$\xi$, and prove that the approximate limit of $\langle\nabla_zF(\cdot,z_0),e\rangle$ at $x$ equals $\xi$. Indeed,
by the mean value theorem and hypothesis (H2) one has $g_r(h)\to 0$ as $h\to 0$ uniformly for $r\in (0,1)$, therefore
$$
\limsup_{r\downarrow 0}\mean{B_r(x)}\bigl|\langle\nabla_z F(y,z_0),e\rangle-\xi| dy
$$
can be estimated from above with
$$
\limsup_{i\to\infty}\limsup_{r\downarrow 0}\mean{B_r(x)}
\biggl|\frac{F(y,z_0+h_ie)-F(y,z_0)}{h_i}-\frac{\tilde{F}(x,z_0+h_ie)-\tilde{F}(x,z_0)}{h_i}\biggr| dx.
$$
The latter is equal to 0 because of \eqref{m}. This proves the existence of the approximate limit.

In order to prove \eqref{m1} suffices to pass to the limit as $r\downarrow 0$ into the identity
$$
\nabla_z\mean{B_r(x)}F(y,z) dy=\mean{B_r(x)}\nabla_z F(y,z) dy.
$$
\end{proof}

In case $\mu = \Leb n$, Lemma~\ref{serve} can be refined in the
following way:
\begin{lemma}\label{lemma ac}
There exists a Lebesgue negligible set $E$ such that for any $x\in E^c$ and every $z \in \R^h$ it holds
\begin{equation}\label{ac}
\nabla_x F(x,z) =\lim_{r \downarrow 0}\frac{
D_x F(\cdot,z)(B_r(x))}{\Leb n(B_r(x))},
\end{equation}
where $\nabla _x F(x,z)$ is the approximate gradient at $x$ of
$y\mapsto F(y,z)$. Moreover $\nabla_x F(x,z)$ is Borel measurable in
$x$ and continuous in $z$.
\end{lemma}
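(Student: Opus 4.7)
My plan is to deduce the lemma from Lemma~\ref{serve} applied with $\mu=\Leb{n}$, combined with an \emph{ad hoc} analysis of the jump parts of $D_x F(\cdot,z)$, which are uniformly dominated by a measure singular with respect to $\Leb{n}$.

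The first step is to apply Lemma~\ref{serve} with $\mu=\Leb{n}$, producing a Borel set $A_1$ of full Lebesgue measure on which, for every $z\in\R^h$, the Besicovitch limit
\[
\varphi(x,z)\defeq\lim_{r\downarrow 0}\frac{\widetilde{D}_x F(\cdot,z)(B_r(x))}{\Leb{n}(B_r(x))}
\]
exists, is Borel in $x$, and is continuous in $z$ on each compact $H\subset\R^h$ with modulus $\omega_H(|z-z'|)\,\frac{d\lambda_H}{d\Leb{n}}(x)$, which is finite at $\Leb{n}$-a.e.\ $x$. To bring in the full derivative $D_x F(\cdot,z)=\widetilde{D}_x F(\cdot,z)+D^j_x F(\cdot,z)$, I would exploit the inclusion $J_{F(\cdot,z)}\subset B_\sigma$ and the inequality $|D_x F(\cdot,z)|\le\sigma$, which together yield the uniform bound $|D^j_x F(\cdot,z)|(A)\le\sigma(A\cap B_\sigma)$ for every Borel $A$ and every $z\in\R^h$. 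Since $B_\sigma$ is $\sigma$-finite with respect to $\Haus{n-1}$ and hence Lebesgue negligible, the measure $\sigma\res B_\sigma$ is singular with respect to $\Leb{n}$, so Besicovitch differentiation provides a Borel set $A_2$ of full Lebesgue measure on which $\sigma(B_r(x)\cap B_\sigma)/\Leb{n}(B_r(x))\to 0$. Combining the two bounds, for every $x\in A_1\cap A_2$ and every $z\in\R^h$ one obtains
\[
\lim_{r\downarrow 0}\frac{D_x F(\cdot,z)(B_r(x))}{\Leb{n}(B_r(x))}=\varphi(x,z).
\]

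The final step is to identify $\varphi(x,z)$ with the approximate gradient of $y\mapsto F(y,z)$ at $x$, uniformly in $z$. For each fixed $z$ this identification holds $\Leb{n}$-a.e.\ by standard $BV$ theory (cf.\ \eqref{absolute}--\eqref{appgrad}), since the approximate gradient coincides with the Besicovitch density of $D_x F(\cdot,z)$ with respect to $\Leb{n}$. Choosing a countable dense set $U\subset\R^h$ and removing from $A_1\cap A_2$ the $\Leb{n}$-null sets associated with each $z\in U$ produces the required common exceptional set $E$; for $z\notin U$ one \emph{defines} $\nabla_x F(x,z)\defeq\varphi(x,z)$, which is automatically consistent with the a.e.\ notion of approximate gradient. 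Borel measurability in $x$ and continuity in $z$ are inherited from Lemma~\ref{serve}. The main obstacle is precisely the need to choose the exceptional set $E$ independently of $z$ although the approximate gradient is defined only $\Leb{n}$-a.e.\ for each fixed $z$; it is overcome by the uniform jump bound through $\sigma\res B_\sigma$ together with the $z$-modulus of continuity supplied by (H3), which allow one to pass from the countable dense set $U$ to all of $\R^h$ without enlarging $E$.
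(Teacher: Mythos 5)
Your argument correctly reproduces the first half of the paper's proof: applying Lemma~\ref{serve} with $\mu=\Leb{n}$, decomposing $D_xF(\cdot,z)=\widetilde D_xF(\cdot,z)+D^j_xF(\cdot,z)$, dominating the jump part uniformly by $\sigma\res B_\sigma$, and using $\sigma\res B_\sigma\perp\Leb n$ to discard it at $\Leb n$-a.e.\ $x$ (the paper encodes this as \eqref{sigmabsigma}). This gives convergence of the Besicovitch densities $D_xF(\cdot,z)(B_r(x))/\Leb n(B_r(x))\to\varphi(x,z)$ for all $z$ simultaneously, off a single null set.

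The gap is in the last step, where you write that for $z\notin U$ one simply ``defines'' $\nabla_xF(x,z):=\varphi(x,z)$, which is ``automatically consistent'' with the approximate gradient. That is not a proof. The lemma asserts that for every $x\in E^c$ and every $z\in\R^h$ the map $y\mapsto F(y,z)$ is approximately differentiable at $x$ in the strong sense of \eqref{appgrad}, i.e.
\[
\lim_{r\downarrow 0}\frac{1}{r^{n+1}}\int_{B_r(x)}\bigl|F(y,z)-\tilde F(x,z)-\varphi(x,z)\cdot(y-x)\bigr|\,dy=0\,,
\]
and convergence of the measure densities is strictly weaker than this Taylor-type estimate. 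You have the estimate only for $z$ in the countable set $U$; transferring it to a general $z$ requires a quantitative bound on
\[
\frac{1}{r^{n+1}}\int_{B_r(x)}\bigl|\bigl(F(y,z)-F(y,z_k)\bigr)-\bigl(\tilde F(x,z)-\tilde F(x,z_k)\bigr)-\bigl(\varphi(x,z)-\varphi(x,z_k)\bigr)\cdot(y-x)\bigr|\,dy
\]
\emph{uniformly} in $r$. The paper obtains exactly this by applying \cite[Lemma 3.81]{AFP} to the $BV$ function $F(\cdot,z)-F(\cdot,z_k)$, which bounds the quantity above by $\sup_{t\in(0,1)}|D_xF(\cdot,z)-D_xF(\cdot,z_k)|(B_{tr}(x))/(tr)^n$ plus a term controlled by $|\varphi(x,z)-\nabla_xF(x,z_k)|$; the jump contribution is then killed by $\sigma\res B_\sigma\perp\Leb n$ and the diffuse contribution by (H3). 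Without this estimate the passage from $U$ to all of $\R^h$ does not go through, so the claim that the exceptional set $E$ works for all $z$ remains unproved. You correctly identified the obstacle in your closing sentence, but the mechanism you invoke (``define it and it's consistent'') does not actually overcome it.
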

\begin{proof}
By an easy approximation argument we can assume that $z$ varies in a
compact set $H$. Thanks to Lemma~\ref{serve}, we know that the limit
in \eqref{ac} exists for every $z\in \R^h$ and $x \in A$, where
$\Leb n(A^c)=0$. Moreover, if we call such limit $L(x,z)$ we have
for any $x\in A$, $z_1,\, z_2 \in H$ it holds
\begin{equation}\label{1}
|L(x,z_1)-L(x,z_2)|\le \omega_H(|z_1-z_2|)\frac{d\lambda_H}{d\Leb n
}(x).
\end{equation}
Possibly removing a negligible set we can also assume that for every $x$ in $A$,
$F(\cdot,z)$ is approximately continuous  for any $z$ (see
Lemma~\ref{convuniforme}),  $d\lambda_H/d\Leb n$ exists and that (since clearly $\sigma \res B_\sigma \perp \Leb n$)
\begin{equation}\label{sigmabsigma}
\lim_{r \downarrow 0} \frac{\sigma\res B_\sigma (B_r(x))}{r^n}=0.
\end{equation}
Let us now choose a countable dense set
$U \subset H$; thanks to~\cite[Theorem 3.83]{AFP} we know
that there exists a Borel set $\tilde B$ with Lebesgue negligible
complement such that for any $x\in \tilde B$ and $z\in U$
\begin{equation}\label{2}
\nabla_x F(x,z) =L(x,z)=\lim_{r \downarrow 0}\frac{\widetilde{D}_xF(\cdot,z)(B_r(x))}{\Leb n(B_r(x))}
\end{equation}
and
\[
\lim_{r \downarrow 0} \frac{1}{r^{n+1}} \int_{B_r(x)} 
|F(y,z)-F(x,z)-\nabla_x F(x,z)\cdot (y-x)|\, dy=0.
\]
Choosing $x\in E:=\tilde B\cap A$, $z\in \R^h$ and a sequence
$(z_k)\subset U$ converging to $z$, we have
\[
\begin{split}
& \frac{1}{r^{n+1}} \int_{B_r(x)} |F(y,z)-\tilde F(x,z)-L(x,z)\cdot (y-x)|\,dy\\
&\le \frac{1}{r^{n+1}} \int_{B_r(x)} |F(y,z_k)-\tilde F(x,z_k)-\nabla_x F(x,z_k)\cdot (y-x)|\,dy\\
&+\frac{1}{r^{n+1}} \int_{B_r(x)} |F(y,z)-F(y,z_k)-\tilde F(x,z)+\tilde F(x,z_k)-(\nabla_x F(x,z_k)-L(x,z))\cdot (y-x)|\,dy\\
&\le o_r (1)+\sup_{t\in (0,1)} \frac{|D_x F(\cdot,z)-D_x F(\cdot, z_k)|(B_{tr}(x))}{(tr)^n}+\omega_n |L(x,z)-\nabla_{x} F(x,z_k)|\\
& \le o_r (1)+\sup_{t\in (0,1)}\Bigg( \frac{|D_x F(\cdot,z)-D_x F(\cdot, z_k)|(B_{tr}(x)\cap (\R^n\setminus B_\sigma))}{(tr)^n}+\frac{2\sigma\res B_\sigma (B_{tr}(x))}{(tr)^n}\Bigg)\\
&+\omega_n |L(x,z)-\nabla_{x} F(x,z_k)|\\
&\le o_r (1)+\sup_{t\in (0,1)} \frac{|\widetilde{D}_x F(\cdot,z)-\widetilde{D}_x F(\cdot, z_k)|(B_{tr}(x))}{(tr)^n}+\omega_n |L(x,z)-\nabla_{x} F(x,z_k)|\\
&\le o_r (1)+\omega_H(|z-z_k|)\sup_{t\in (0,1)}\frac{\lambda_H
(B_{tr}(x))}{(tr)^n}+\omega_n|L(x,z)-\nabla_{x} F(x,z_k)|\\
&\le o_r (1)+\omega_H(|z-z_k|)\sup_{t\in (0,1)}\frac{\lambda_H
(B_{tr}(x))}{(tr)^n}+\omega_n\omega_H(|z-z_k|)\frac{d\lambda_H}{d\Leb n
}(x)\,,
\end{split}
\]
where in the second inequality we applied \cite[Lemma 3.81]{AFP} to
the function $F(\cdot,z)-F(\cdot,z_k)$, in the fourth one the fact that $D_x F(\cdot, z)\res (\R^n \setminus B_\sigma)\le \widetilde{D}_x F(\cdot, z) $ for any $z$ and equation \eqref{sigmabsigma}, in the last but one hypothesis (H3), and finally in the last one \eqref{1} and \eqref{2} (recall that $z_k\in U$). 
Passing to the limit first in $r$ and then in $k$ we obtain the thesis.
\end{proof}

\section{Proof of the chain rule}

In this section we prove Theorem~\ref{chain rule}.

Define $\mu=\sigma+M|Du|$ and
\[
J:=\bigl\{x: \limsup_{r\downarrow 0}
\frac{\mu(B_r(x))}{\omega_{n-1}r^{n-1}}>0\bigr\}.
\]
Notice that $J$ is $\sigma$-finite with respect to $\Haus{n-1}$ and
that
\[
\bigl\{x: \limsup_{r \downarrow 0}
\frac{|Du|(B_r(x))}{\omega_{n-1}r^{n-1}}>0\bigr\}\cup B_\sigma= J,
\]
where $B_\sigma$ is defined in \eqref{bsigma}. By \eqref{inclju} and
\eqref{inclju2} we also get
\begin{equation}\label{deco}
J\supset B_\sigma \cup
J_u\qquad\text{and}\qquad\Haus{n-1}\bigl(J\setminus (B_\sigma\cup
J_u)\bigr)=0.
\end{equation}

By Proposition~\ref{convuniforme} and \eqref{inclju} we can add to
$J$ a $\Haus{n-1}$-negligible set $A$ to obtain a set $\tilde
J=J\cup A$ satisfying
\[
\begin{split}
&\lim_{r \downarrow 0} \mean{B_r(x)}|F(y,z)-\tilde F(x,z)|dy=0\quad\forall z\in\R^h, \\
&\lim_{r \downarrow 0} \mean{B_r(x)}|u(y)-\tilde u(x)|dy=0
\end{split}
\]
for all $x\in\R^n\setminus\tilde J$. We claim that $v$ is
approximately continuous at all points in $\R^n\setminus\tilde J$
with precise representative $ \tilde v(x)=\tilde F(x,\tilde u(x))$.
To see this, compute
\[
\begin{split}
&\limsup_{r\downarrow 0} \mean{B_r(x)}|F(y,u(y))-\tilde F(x,\tilde u(x))|dy\\
&\le \limsup_{r\downarrow 0} \mean{B_r(x)}|F(y,u(y))-F(y,\tilde
u(x))|dy
+\limsup_{r\downarrow 0} \mean{B_r(x)}|F(y,\tilde u(x))-\tilde F(x,\tilde u(x))|dy\\
&\le\limsup_{r\downarrow 0} M\mean{B_r(x)}|u(y)-\tilde
u(x)|dy+\limsup_{r \downarrow 0} \mean{B_r(x)}|F(y,\tilde
u(x))-\tilde F(x,\tilde u(x))|dy=0.
\end{split}
\]
In particular it holds that
\begin{equation}\label{ritardo}
\begin{split}
&Dv\res (\R^n\setminus\tilde J)=Dv\res (\R^n\setminus J)=\widetilde{D}v,\\
&Du\res (\R^n\setminus\tilde J)=Du\res (\R^n\setminus J)=\widetilde{D}u,\\
&D_xF(\cdot,z)\res (\R^n\setminus\tilde J)=D_xF(\cdot,z)\res
(\R^n\setminus J)=\widetilde {D}_xF(\cdot,z)\quad\forall z\in\R^h.
\end{split}
\end{equation}
Since $D v\aac\mu$, in order to characterize $\widetilde{D}v$ it
suffices to show that for $\mu$-almost every
$x_0\in\R^n\setminus\tilde J$ it holds:
\begin{equation}\label{mu}
\frac{d\widetilde{D}v}{d\mu}(x_0)=\frac{d\widetilde{D}_xF(\cdot,\tilde{u}(x_0))}{d\mu}(x_0)+\nabla_z\tilde F(x_0,\tilde
u(x_0))\frac{d\widetilde{D}u}{d\mu}(x_0).
\end{equation}
Choose a point $x_0\in \R^n\setminus\tilde J$ such that
(understanding densities in the pointwise sense as in
\eqref{eq:densitiesmeaning}):
\begin{enumerate}
\item[(i)]there exists $d\widetilde{D}u/d\mu$ at $x_0$,
\item[(ii)]there exists $d\widetilde{D}v/d\mu$ at $x_0$,
\item[(iii)]there exists $d\widetilde{D}_xF(\cdot,z)/d\mu$ at $x_0$ for every
$z\in\R^h$,
\item[(iv)]there exists $d\lambda_H/d\mu$ at $x_0$, with $H$ compact neighborhood of $\tilde u(x_0)$,
\item[(v)] $x_0 $ is a point of density 1 for $\R^n \setminus \tilde J$ with respect to $\mu$,
\item[(vi)] ${\rm Tan}(\mu,x_0)$ contains a nonzero measure, i.e.
there exist $r_h\downarrow 0$ such that the measures
$\mu_{x_0,r_h}/\mu(B_{r_h}(x_0))$ (with $\mu_{x_0,r}(B)=\mu(x_0+rB)$)
weakly converge, in the duality with $C_c(B_1(0))$, to $\nu\neq 0$.
\end{enumerate}
Notice that $\mu$-almost every $x_0\in\R^n\setminus\tilde J$
satisfies the previous assumptions. Indeed, (iii) is satisfied
thanks to Lemma~\ref{serve}, while (vi) follows from \cite[Proposition 2.42]{AFP}.

Define for $y\in B_1(0)$
\[
u^r(y)\defeq
\frac{u(x_0+ry)-m_r(u)}{\mu(B_r(x_0))/r^{n-1}}\qquad\text{and}\qquad
v^r(y)\defeq \frac{v(x_0+ry)-m^F_r (m_r(u))}{\mu(B_r(x_0))/r^{n-1}}
\]
where
\[
 m_r(u)=\mean{B_r(x_0)} u(x)dx\qquad\text{and}\qquad
 m_r^F(z)=\mean{B_r(x_0)}F(x,z)dx.
\]

We claim that $v^r$ is
relatively compact in $L^1(B_1(0))$.
Namely, let us relate more precisely
$v^r$ to $u^r$, writing
\begin{equation}\label{alica1}
\begin{split}
v^r(y)&=\frac {F(x_0+ry,u(x_0+ry))-m^F_r(m_r(u))} {\mu(B_r(x_0))/r^{n-1}}\\
&=\frac 1 {\mu(B_r(x_0))/r^{n-1}}\Big\{F(x_0+ry,u(x_0+ry))-F(x_0+ry,m_r(u)))\\
&+F(x_0+ry,m_r(u))-m^F_r(m_r(u))\Big\}\\
&=\frac 1 {\mu(B_r(x_0))/r^{n-1}}\Big\{\nabla_z F(x_0+ry,m_r(u))(u(x_0+ry)-m_r(u))+R(y)\\
&+F(x_0+ry,m_r(u))-m^F_r(m_r(u))\Big\}\\
&=\nabla_z
F(x_0+ry,m_r(u))\frac{u(x_0+ry)-m_r(u)}{\mu(B_r(x_0))/r^{n-1}}
+\frac{F(x_0+ry,m_r(u))-m^F_r(m_r(u))}{\mu(B_r(x_0))/r^{n-1}}+\overline
R(y)
\end{split}
\end{equation}
with
\[
R(y)=\bigl(u(x_0+ry)-m_r(u)\bigr)\Big(\int_0^1 \nabla_z
F\big(x_0+ry,m_r(u)+t(u(x_0+ry)-m_r(u)\big)dt-\nabla_z
F(x_0+ry,m_r(u))\Big)
\]
and $\overline R=R r^{n-1}/\mu(B_r(x_0))$. By Poincar\'e inequality
\[
u^r(y)\defeq\frac{u(x_0+ry)-m_r(u)}{\mu(B_r(x_0))/r^{n-1}}
\]
is bounded in $BV(B_1(0))$ and therefore relatively compact in the
strong topology of $L^1(B_1(0))$. In addition
\begin{equation}\label{alica2}
\nabla_z F(x_0+ry,m_r(u))\weakstarto \nabla_z \tilde F(x_0,\tilde
u(x_0)) \quad\text{in $L^\infty(B_1(0))$}
\end{equation}
thanks to (H2) and Lemma~\ref{Convgrad}.

The second term in \eqref{alica1}, namely
\[
F^r(y)\defeq\frac{F(x_0+ry,m_r(u))-m^F_r(m_r(u))}{\mu(B_r(x_0))/r^{n-1}}
\]
satisfies
\[
|D_y F^r|(B_1(0))\le \frac{|D_x
F(\cdot,m_r(u))|(B_r(x_0))}{\mu(B_r(x_0))}\le
\frac{\sigma(B_r(x_0))}{\mu(B_r(x_0))}\le 1
\]
and
\[
\int_{B_1(0)} F^r(y)dy=0,
\]
so again thanks to Poincar\'e inequality and to the compactness of
the embedding of $BV$ in $L^1$ it is relatively compact in the
strong topology of $L^1(B_1(0))$. Finally
\begin{equation}\label{alica3}
\overline R(y)= u^r(y)\Big(\int_0^1 \nabla_z
F\big(x_0+ry,m_r(u)+t(u(x_0+ry)-m_r(u)\big)dt-\nabla_z
F(x_0+ry,m_r(u)\Big)\to 0
\end{equation}
in $L^1(B_1(0))$ thanks to (H2),
proving the claim.

Now choose a sequence $r_h\downarrow 0$ such that
\begin{enumerate}
\item[(a)]$ v^{r_h}\to v^0 $ in $L^1(B_1(0))$,
\item[(b)]$ u^{r_h}\to u^0 \in BV(B_1(0))$ in $L^1(B_1(0))$,
\item[(c)]$F^{r_h} \to F^0 \in BV(B_1(0))$ in $L^1(B_1(0))$,
\item[(d)] the measures $\mu_{x_0,r_h}/\mu(B(x_0,r_h))$ weakly converge in the duality with $C_c(B_1(0))$
to $\nu\neq 0$.
\end{enumerate}
Thanks to \eqref{alica1}, \eqref{alica2} and \eqref{alica3} we have
\[
v^0(y)=F^0(y)+\nabla_z\tilde F(x_0,\tilde u(x_0))u^0(y)
\]
and hence for all $ t\in [0,1] $
\begin{equation}\label{grad}
Dv^0(B_t(0))=DF^0(B_t(0))+\nabla_z\tilde F(x_0,\tilde u(x_0))D
u^0(B_t(0)).
\end{equation}
We now compute the terms $DF^0(B_t(0))$ and $Du^0(B_t(0))$ in the
previous equality. Clearly $DF^{r_h} \weakstarto DF^0$, now choose
$t\in (0,1]$ such that $\nu(B_t(0))\ne 0$ and $\nu(\partial
B_t(0))=0$, then we have (writing in short $D_xF(z)$ for $D_x
F(\cdot,z)$)
\begin{equation}\label{F}
\begin{split}
&DF^0(B_t(0))=\lim_{h \to \infty}  DF^{r_h}(B_t(0))\\
& =\lim_{h\to\infty} \frac{D_x
F(m_{r_h}(u))(B_{tr_h}(x_0))}{\mu(B_{tr_h}(x_0))}
\frac{\mu(B_{tr_h}(x_0))}{\mu(B_{r_h}(x_0))}\\
& = \lim_{h\to\infty} \frac{\big[D_x F(m_{r_h}(u))-D_x F(\tilde
u(x_0))+D_x F(\tilde u(x_0))\big]
(B_{tr_h}(x_0))}{\mu(B_{tr_h}(x_0))}\frac{\mu(B_{tr_h}(x_0))}{\mu(B_{r_h}(x_0))}\\
& = \Big(\lim_{h\to\infty}I(r_h)+\frac{d D_x F(\tilde
u(x_0))}{d\mu}(x_0)\Big)\nu(B_t(0))=\frac{d D_x F(\cdot,\tilde
u(x_0))}{d\mu}(x_0)\nu(B_t(0)),
\end{split}
\end{equation}
since thanks to equation \eqref{ritardo}, (v) and (H3)
\[
\begin{split}
\limsup_{r\downarrow 0} |I(r)|& = \limsup_{r\downarrow 0}
\frac{|D_x F(m_{r}(u))(B_r(x_0))-D_x F(\tilde u(x_0))(B_r(x_0))|}{\mu(B_r(x_0))}\\
& \leq \limsup_{r\downarrow 0}\Bigg(
\frac{|\widetilde{D}_x F(m_{r}(u))(B_r(x_0))-\widetilde{D}_x F(\tilde u(x_0))(B_r(x_0))|}{\mu(B_r(x_0))}+\frac{2\mu(B_r(x_0)\cap \tilde J)}{\mu(B_r(x_0))}\Bigg)\\
& \leq \frac{d \lambda_H}{d\mu}(x_0)\limsup_{r\downarrow
0}\omega_H(|m_r(u)-\tilde u(x_0)|)=0.
\end{split}
\]

A similar (and simpler) calculation gives
\begin{equation}\label{u,v}
Du^0(B_t(0))=\frac{dDu^0}{d\mu}(x_0)\nu(B_t(0)),\qquad
Dv^0(B_t(0))=\frac{dDv^0}{d\mu}(x_0)\nu(B_t(0))
\end{equation}
for all $t\in (0,1]$ such that $\nu(\partial B_t(0))=0$. Comparing
\eqref{F} and \eqref{u,v} with \eqref{grad} we obtain \eqref{mu} and
hence statement (i).

We now prove statement (ii), notice that \eqref{rappDjSigma},
\eqref{deco} and the rectifiability of $B_\sigma$ imply
\[
\begin{split}
D^j v&=D^j v\res J_v=D^j v\res J=D^j v \res (J_u \cup B_\sigma)\\
&=(v^+(x)-v^-(x))\nu_{J_u \cup B_\sigma}\Haus{n-1} \res J_u \cup B_\sigma.
\end{split}
\]
So we have only to check that
\[
v^\pm=F^\pm(x,u^\pm(x))
\]
$\Haus{n-1}$-almost everywhere in $J_u\cup B_\sigma$. To do this,
recall that, thanks to Proposition~\ref{convuniforme}(ii), we have
for $\Haus{n-1}$-almost every $x\in J_u \cup B_\sigma$
\[
\lim_{r \to 0}\mean{B^{\pm}_r(x)}|F(y,z)-F^\pm(x,z)|dy = 0
\]
for every $z \in \R^h $ and that the same is true for $ u $ and $ v $. Choose any such $ x $, then
\[
\begin{split}
v^\pm(x)&=\lim_{r \to 0}\mean{B^\pm_r(x)}F(y,u(y))dy\\
&=\lim_{r \to 0}\mean{B^\pm_r(x)}F(y,u^\pm(x))dy+ \lim_{r \to 0}\mean{B^\pm_r(x)}F(y,u(y))-F(y,u^\pm(x))dy\\
&=F^\pm(x,u^\pm(x)),
\end{split}
\]
since
\[
\lim_{r \to 0} \mean{B^\pm_r(x)}|F(y,u(y))-F(y,u^\pm(x))|dy \le \lim_{r \to 0} M  \mean{B^\pm_r(x)}|u(y)-u^\pm(x)|dy=0.
\]
So, part (b) of the theorem follows with $\mathcal N_F= B_\sigma$.

Finally we prove \eqref{eq:appdif}. Choosing $\mu= \sigma +
M|Du|$ in \eqref{diffuse}, multiplying both sides by $d\mu /d \Leb
n$ and recalling that if $\nu\ll\mu$ then
\[
\frac{d \nu}{d \Leb n}=\frac{d \nu}{d \mu}\frac{d \mu}{d \Leb n},
\]
 we get
\[
\frac {d D v }{d \Leb n} = \frac {D_x F(\cdot,\tilde u(x))}{d\Leb
n}+ \nabla_z\tilde F(x,\tilde u(x))\frac{dDu}{ d\Leb n}.
\]
Using equation \eqref{absolute} and Lemma \ref{lemma ac} we thus
obtain:
\[
D^a v= \nabla v\, d \Leb n= \nabla_x F(x,\tilde u(x))\, d \Leb n +
\nabla_z\tilde F(x,\tilde u(x))\nabla u(x)\, d \Leb n
\]
where $\nabla v$, $\nabla_x F(x,z)$ and $\nabla u$ are the
approximate gradients of the maps $v$, $F(\cdot,z)$ and $u$ (notice
again that the composition $\nabla_x F(x,\tilde u (x))$ is well
defined thanks to the Scorza Dragoni Theorem).
\qed
%


\def\cprime{$'$}
\providecommand{\bysame}{\leavevmode\hbox to3em{\hrulefill}\thinspace}
\def\MR#1{}
\providecommand{\MRhref}[2]{%
  \href{http://www.ams.org/mathscinet-getitem?mr=#1}{#2}
}
\providecommand{\href}[2]{#2}

\end{document}